\newtheorem{theorem}{Theorem}[section]
\newtheorem{lemma}[theorem]{Lemma}
\newtheorem{corollary}[theorem]{Corollary}
\theoremstyle{definition}
\theoremstyle{remark}
\newtheorem{remark}[theorem]{Remark}
\newcommand{\eps}{\varepsilon} \def\R{\mathbb R} 
\def\I{\mathcal I} \def\E{\mathcal E} \def\diam{{\rm diam}}
\begin{document}                        


\title{On equilibrium shape of charged flat drops}

\author{Cyrill B. Muratov}{New Jersey Institute of Technology}
\author{Matteo Novaga}{University
	of Pisa}
\author{Berardo Ruffini}{Institut Montpelli\'erain Alexander Grothendieck}





\begin{abstract}
Equilibrium shapes of two-dimensional charged, perfectly conducting
liquid drops are governed by a geometric variational problem that
involves a perimeter term modeling line tension and a capacitary
term modeling Coulombic repulsion. Here we give a complete explicit
solution to this variational problem. Namely, we show that at fixed
total charge a ball of a particular radius is the unique global
minimizer among all sufficiently regular sets in the plane. For sets
whose area is also fixed, we show that balls are the only minimizers
if the charge is less than or equal to a critical charge, while for
larger charge minimizers do not exist. Analogous results hold for
drops whose potential, rather than charge, is fixed.
\end{abstract}

\maketitle   






\section{Introduction}

In this paper we are interested in a geometric variational problem
that arises in the studies of electrified liquids (for a recent
overview, see, e.g., \cite{fernandezdelamora07}). Consider an
insulating fluid between two parallel plates, also made from an
insulating material. From a small opening in one of the plates an
immiscible, electrically conducting incompressible liquid is
introduced and is allowed to fill a region whose lateral dimensions
greatly exceed the distance between the plates. Because of its
essentially two-dimensional character, we call the resulting
pancake-shaped drop of the conducting liquid a {\em flat drop.} A
voltage is then applied through the opening, imparting an electric
charge to the liquid. One would like to understand the equilibrium
shape attained by the drop, either at constant voltage or constant
total charge, and either at constant or variable volume. Physically,
this problem is interesting because the electric field produced by the
charges tends to destabilize the liquid interface
\cite{Zel,Tay,FonFri,landau8,fernandezdelamora07}, an effect that is
used extensively in a variety of applications such as mass
spectrometry \cite{gaskell97}, electrospinning \cite{barrero07} and
microfluidics \cite{castrohernandez15}. Mathematically, this problem
is interesting because the interplay between the capillary and
Coulombic forces produces a competition that makes prediction of the
energy minimizing configurations a highly non-trivial task.

The equilibrium configurations of electrified conducting liquids may
be studied with the help of a basic variational model that goes back
all the way to Lord Rayleigh \cite{Ray}. Let $K \subset \R^3$ be a
compact set with smooth boundary. If $\sigma$ is the surface tension,
$\epsilon$ is the relative permittivity of the surrounding medium and
$\epsilon_0$ is the permittivity of vacuum, then the energy of a
charged, perfectly conducting liquid drop occupying $K$, with volume
$V$ and the total charge $Q$, is given by \cite{landau8}
\begin{align}
\label{eq:E3dQ}
E^Q_{3d}(K) = \sigma \mathcal H^2(\partial K) + {Q^2 \over 2 C(K)},
\end{align}
where $\mathcal H^s$ denotes the $s$-dimensional Hausdorff measure,
with $\mathcal H^2(\partial K)$ representing the surface area of the
liquid, and
\begin{align}
\label{eq:COm}
{1 \over C(K)} := {1 \over 4 \pi 
	\epsilon \epsilon_0} \inf_{\mu(K) = 1} \int_K \int_K
{d \mu(x) d \mu(y) \over |x - y|},
\end{align}
is the inverse electric capacitance of $K$ in the SI units. Here $\mu$
is a probability measure supported on $K$, and the infimum in
\eqref{eq:COm} is attained by a unique measure that concentrates on
$\partial K$ \cite{landkof}. If instead the voltage $U$ on
the drop is prescribed, then the total energy (including that of the
charge reservoir) is 
\begin{align}
\label{eq:E3dU}
E^U_{3d}(K) := \sigma \mathcal H^2(\partial K) - \frac12 C(K) U^2.
\end{align}

For flat drops, this model can be reformulated as follows. Let $d$ be
the distance between the plates and assume that
$K = \Omega \times [0, d]$, where $\Omega \subset \mathbb R^2$ is a
compact set with smooth boundary which represents the shape of the
flat drop. Note that this means that we are assuming the contact angle
$\theta = \frac{\pi}{2}$, corresponding to a flat meniscus. For a
general contact angle $\theta \in [0, \pi]$ the set $K$ may be
similarly defined in terms of $\Omega$, with the shape of the meniscus
determined by balancing the capillary forces locally. In this case the
meniscus cross-section will have the shape of a circular arc of length
$\ell = {\pi - 2 \theta \over 2 \cos \theta} \, d$, with $\ell = d$ if
$\theta = \frac{\pi}{2}$. Then, to the leading order in $d \ll L$,
where $L$ is the characteristic dimension of the drop, the energy
$E_{3d}^Q(K)$ is given by
\begin{align}
\label{eq:EOmQ}
E_{2d}^Q(\Omega) := \sigma \ell 
\mathcal H^1(\partial \Omega) + {Q^2 \over 8 \pi \epsilon
	\epsilon_0} \I_1(\Omega), 
\end{align}
where we introduced the Riesz capacitary energy of a set:
\begin{equation}\label{Ialpha}
\I_\alpha(\Omega):=\inf\left\{
\int_\Omega \int_\Omega \frac{d\mu(x)\,d\mu(y)}{|x-y|^\alpha} \ :\
\mu(\Omega)=1 \right\},
\end{equation}
which is more generally defined for all Borel sets
$\Omega \subset \R^N$ and $\alpha \in (0, N)$. Similarly,
$E_{3d}^U(K)$ becomes
\begin{align}
\label{eq:EOmU}
E_{2d}^U(\Omega) := \sigma \ell 
\mathcal H^1(\partial \Omega) - {2 \pi \epsilon
	\epsilon_0 U^2 \over \I_1(\Omega)}.
\end{align}
The volume of the flat drop is, to the leading order, equal to
$V = \mathcal |\Omega| d$, where $|\Omega|$ denotes the area of
$\Omega$. Hence, fixing the volume of $K$ amounts to fixing
$|\Omega|$.

We note that the infimum in the definition of $\I_1(\Omega)$ is
attained by a unique measure supported on $\Omega$, and the value
of $\I_1(\Omega)$ is proportional to the inverse of the Riesz capacity
of $\Omega$ \cite{landkof}. Also, for fixed volume there is no
shape-dependent contribution to the energy from the liquid-solid
interface to the leading order.  We point out that the same type of
mathematical models is relevant to systems which give rise to electric
charges concentrating on planar regions, such as high-$T_c$
superconductors \cite{emery93}, Langmuir monolayers \cite{andelman87}
and, possibly, graphene \cite{samaddar16}. In addition, it should be
possible to rigorously obtain the corresponding two-dimensional
expressions for the energy from the original three-dimensional energy
in the limit $d \to 0$ via $\Gamma$-convergence for a suitable notion
of convergence of sets \cite{braides}.

We are now in a position to introduce the dimensionless versions of
the energies that we will analyze. Setting
\begin{align}
\label{eq:EElam}
\E^Q_\lambda(\Omega) := {E^Q_{2d} (L \Omega) \over \sigma \ell L},
\qquad \lambda := {Q^2 \over 8 \pi \epsilon \epsilon_0 \sigma \ell
	L^2}, 
\end{align}
we obtain a one-parameter family of energy functionals
\begin{align}
\label{eq:Elam}
\E^Q_\lambda(\Omega) = \mathcal H^1(\partial \Omega) + \lambda
\I_1(\Omega), 
\end{align}
with $\lambda > 0$ a fixed dimensionless parameter, defined on all
compact subsets of $\R^2$. Similarly, setting
\begin{align}
\label{eq:EElambda2}
\E^U_\lambda (\Omega) := {E^U_{2d} (L \Omega) \over \sigma \ell L},
\qquad \lambda := {2 \pi \epsilon \epsilon_0  U^2 \over \sigma \ell}, 
\end{align}
we obtain a one-parameter family of energy functionals
\begin{align}
\label{eq:Elambda2}
\E^U_\lambda(\Omega) = \mathcal H^1(\partial \Omega) - {\lambda \over
	\I_1(\Omega)}, 
\end{align}
with $\lambda > 0$ a fixed dimensionless parameter. These energy
functionals and their global minimizers with or without constraints
are the main subject of the present paper.

There has recently been a great interest in non-local variational
problems involving energies like those in \eqref{eq:Elam} and
\eqref{eq:Elambda2}.  In particular, the non-local isoperimetric
problems governed by \eqref{eq:E3dQ}, or \eqref{eq:Elam}, or their
generalizations involving Riesz energy, in which the measure $\mu$ is
fixed to be the uniform measure, was recently treated in
\cite{KM1,KM2, lu14, bonacini14,julin14,
	mz:agag15,figalli15,frank15,FKN} (see also
\cite{alberti09,choksi10, gms:arma13,gms:arma14, kmn16} for closely
related problems on large bounded domains; these lists are by far
not meant to be exhaustive). One of the motivations for those studies
comes from the fact that the three-dimensional problem with uniform
charge density corresponds to the famous model of the atomic nucleus
introduced by Gamow \cite{gamow30} and is used to explain the
stability of nuclear matter and nuclear fission
\cite{weizsacker35,bohr39}. It is now known that in the case of the
energy in \eqref{eq:Elam} with $\I_1$ evaluated, using
$d \mu = |\Omega|^{-1} dx$, the unique minimizer with $|\Omega| = m$
fixed is a ball as long as $m \leq m_0 \lambda^{-1}$, where $m_0 > 0$
is a universal constant. On the other hand, there is no minimizer with
$|\Omega| = m$ as soon as $m > m_2 \lambda^{-1}$, where $m_2 \geq m_0$
is another universal constant \cite{KM1}. These conclusions have been
extended to the case of higher-dimensional problems and more general
Riesz kernels \cite{KM2,lu14,julin14,figalli15}. The basic intuition
behind these results is that when the value of $m$ is sufficiently
small, the perimeter term dominates the energy, forcing the minimizer
to coincide with the one for the classical isoperimetric problem. On
the other hand, when the value of $m$ is sufficiently large, the
repulsive term becomes dominant, forcing the charges to move off to
infinity.  The observations above motivate the conjecture, first
stated in \cite{choksi11}, that minimizers of the problem with uniform
charge density are balls, unless it is better to split one ball into
to two equal size balls. Whether or not this conjecture is true in its
full generality is a very difficult question, but it was answered in
the affirmative in the case of two-dimensional problems involving
Riesz kernels with a sufficiently small exponent $\alpha$ \cite{KM1,
	mz:agag15}. A weaker version of this result in higher dimensions,
namely, that minimizers of the above problem with $\alpha$
sufficiently small are balls whenever they exist, without specifying
the threshold for existence, was established in \cite{bonacini14}.

One might expect that the capacitary problem in \eqref{eq:E3dQ} should
exhibit a similar behavior. Studies of instability of balls with
respect to small smooth perturbations in three space dimensions go
back to Lord Rayleigh \cite{Ray}, and emergence of regular
non-symmetric critical points out of the bifurcation at the critical
charge has been established much more recently in \cite{FonFri} (for
some numerical work, see also
\cite{basaran89a,burton11,garzon14}). Yet, a striking result from a
recent paper \cite{gnrI} shows that the problem of minimizing the
energy in \eqref{eq:E3dQ} {\em never} admits global minimizers with
$|\Omega| = m$. Even worse, this problem was shown never to admit any
regular {\em local minimizers}, understood in any reasonable sense
\cite{MurNov}. This severe ill-posedness of the three-dimensional
capacitary problem has to do with an incompatibility between the
perimeter term, which sees two-dimensional sets, and the Coulombic
term, which sees sets of positive Newtonian capacity. The latter,
however, could concentrate on sets of positive $s$-dimensional
Hausdorff measure with $1 < s < 2$ \cite{landkof}. Thus, there is a
gap between the Hausdorff dimension of the sets that are negligible
with respect to the perimeter and those negligible with respect to the
Coulombic energy, which prevents the perimeter from imparting
classical regularity to the minimizers. Note that existence may be
restored, if one looks for minimizers in more regular classes, such
as, for example, the class of convex compact sets \cite{gnrII}. Also,
well-posedness under confinement in three dimensions can be restored
by including entropic effects into the model, resulting in a finite
screening length \cite{MurNov}.

Coming back to our main problem of interest, which consists of
minimizing $\E^Q_\lambda(\Omega)$ in \eqref{eq:Elam} among all compact
sets $\Omega \subset \R^2$ with $|\Omega| = m$ fixed, one can see that
the reduced dimensionality of the problem makes it a borderline case
between the ill-posed capacitary problem in three-dimensions and the
well-posed uniform charge density problem in two dimensions. Indeed,
in this case the perimeter controls sets of zero Riesz capacity for
$\alpha = 1$ in two space dimensions. Therefore, the question
of existence vs. non-existence of minimizers in this setting becomes
particularly delicate. In what follows, we will show that the
non-local isoperimetric problem associated with \eqref{eq:Elam}
remains on the ``good side'' of the critical value of $\alpha = 1$,
exhibiting existence of minimizers for small values of $m$ and
non-existence for large values of $m$.

Remarkably, we are able to give a complete explicit solution to this
problem, both with and without the constraint on $|\Omega|$. In the
absence of the constraint, the energy $\E^Q_\lambda$ is uniquely
minimized by balls of radius $R = \frac12 \sqrt{\lambda}$. With the
constraint $|\Omega| = m$ the unique minimizers of $E^Q_\lambda$ are
balls when $\lambda \leq 4 m / \pi$, while there are no minimizers
when $\lambda > 4 m / \pi$. We note that balls loose their minimizing
property {\em before} it becomes energetically favorable to split one
ball into two well separated balls with equal area and charge, which
happens for $\lambda > 4 m \sqrt{2} / \pi$, or before deforming a ball
into an elliptical domain of the same area lowers energy, which
happens for $\lambda > 12 m / \pi$. The nature of the instability for
$\lambda > 4 m / \pi$ is similar to the one for the three-dimensional
capacitary problem \cite{gnrI}: one can lower the energy by splitting
off a large number of small balls, putting the right amount of charge
on them and sending them off to infinity. This {\em catastrophic} loss
of stability suggests that the ill-posedness exhibited by the
three-dimensional problem \cite{gnrI,MurNov} begins to manifest itself
in our two-dimensional problem at a critical value of charge. Lastly,
we also provide a complete solution to the problem of minimizing
$\E^U_\lambda$ from \eqref{eq:Elambda2}. Without the constraint on
$|\Omega|$, this energy has minimizers if and only if
$\lambda = \pi^2$, and the only minimizers are balls (of any
radius). If also $|\Omega| = m$, then balls of area $m$ are the only
minimizers when $\lambda \leq \pi^2$, and there are no minimizers for
$\lambda > \pi^2$.

The strategy of the proof of our main theorems is to take advantage of
the fact that without the volume constraint the energy $\E^Q_\lambda$
of a set $\Omega$ decreases if we convexify each connected
component of $\Omega$. Then we exploit the fact that the
perimeter is {\em linear} for the Minkowski sum \cite{gardner}, while
the non-local term $\I_1$ is {\em sublinear} \cite{NovRuf}. This,
together with some classical, simple results in the theory of convex
bodies, allows us to transform each convex component of a set
into a ball. Eventually we show that by suitably merging several
balls into one ball we decrease the energy. Let us stress that such a
technique is very special and applies only to the two-dimensional case
and only to the capacitary energy $\I_1$. It is indeed an open problem
whether $\I_\alpha$ is a subadditive operator with respect to the
Minkowski summation for $\alpha\ne 1$ (and for $\alpha\ne N-2$ in $\R^N$
with $N \geq 3$). For the constrained problem, we combine the previous
result with the well-known {\em maximizing} property of balls with
respect to the Riesz capacitary energy \cite{polya1,polya2} to prove
existence for small charges. Finally, the non-existence for large
charges for the constrained problem is obtained by constructing a test
configuration with many balls, whose energy in the limit coincides
with that of the unconstrained problem.

Let us also point out that while our paper is concerned exclusively
with global energy minimizers, existence of local minimizers or
critical points of the energy is naturally an interesting question,
both physically and mathematically. Care, however, is needed in
defining the appropriate notion of solution in view of the a priori
lack of regularity and potentially severe ill-posedness of the
considered variational problems \cite{MurNov}. In particular, one
needs to take into consideration the possibility of either local
minimizers or critical points consisting of sets with many small holes
(``froth''). Some new quantitative insights into the capacitary energy
of sets of finite perimeter would be needed to deal with those issues,
which goes beyond the scope of the present work.

The rest of the paper is organized as follows: in Section
\ref{sec:main-results}, we give the precise definitions of the
variational problems to be studied and state our main results, in
Section \ref{sec:prelim} we discuss some preliminary results, and in
Section \ref{sec:proofs} we present the proofs of the main theorems.

\section{Setting and statements of the main results}
\label{sec:main-results}

In this section we state the main results of our paper. Viewing
$\E^Q_\lambda$ in \eqref{eq:Elam} as a map from $\mathcal B(\R^2)$ to
$[0, +\infty]$, where $\mathcal B(\R^2)$ is the family of all Borel
subsets of $\R^2$, we start by defining the appropriate admissible
classes in the domain of $\E^Q_\lambda$. Given two sets
$A,B\subset\R^2$ we introduce the equivalence relation $A\sim B$ if
$\mathcal H^1(A\setminus B)=\mathcal H^1(B\setminus A)=0$.  We then
set
\begin{align}
\label{eq:A}
\mathcal A := \{ \Omega \subset \R^2 \ : \ \Omega \ \text{compact},
\ \ |\Omega| > 0, \ \ \mathcal H^1(\partial \Omega) < \infty \}/\sim\,.
\end{align}
Notice that the equivalence relation $\sim$ in the definition of the
class $\mathcal A$ will be needed in order to have uniqueness of
minimizers of $\E^Q_\lambda$ over $\mathcal A$.

The definition above ensures that $0 < \E^Q_\lambda(\Omega) < \infty$
whenever $\Omega \in \mathcal A$. We note that physically the class
$\mathcal A$ corresponds to flat drops with fixed charge and variable
volume, which can be achieved by placing a porous membrane, permeable
to the liquid but impermeable to the charge-carrying ions, into the
opening supplying the liquid. Similarly, for $m > 0$ we define
\begin{align}
\label{eq:Am}
\mathcal A_m := \{ \Omega \in \mathcal A \ : \ |\Omega| = m\}.
\end{align}
Note that the definitions of both the energy and the associated
admissible classes $\mathcal A$ and $\mathcal A_m$ involve the
Hausdorff measure $\mathcal H^1$ of the {\em topological boundary}.
Unfortunately, $\mathcal H^1$ fails to be lower semicontinuous even
with respect to Hausdorff convergence of compact sets, making it
difficult to apply the direct method of the calculus of variations to
establish existence of minimizers of $\E^Q_\lambda$. One possibility
to proceed might be to replace $\mathcal H^1(\partial \Omega)$ with
$P(\Omega)$, where
\begin{equation}\label{caccioper}
P(\Omega) := \sup\left\{\int_\Omega \nabla \cdot \phi \, dx \ : \ 
\phi\in C^\infty_c(\R^2,\R^2),\,\,\|\phi\|_{L^\infty(\R^2)}\le 1
\right\},  
\end{equation}
is the perimeter functional (which coincides with
$\mathcal H^1(\partial \Omega)$ for sets with smooth boundary
\cite{AFP}), and work in the class of sets of finite perimeter.  This,
however, would bring us to an ill-posed problem: since $\I_1$ is a
capacitary term, it is not well defined on sets which are defined up
to sets of Lebesgue measure zero \cite{landkof}.  This means, for
example, that it is possible to find a compact set consisting of a
disjoint union of a closed ball of area $m$ and a closed set of
Lebesgue measure zero but positive Riesz capacity (by \cite[Theorem
3.13]{landkof}, for the latter one can take any compact set
$K \subset \R^2$ such that $0 < \mathcal H^s(K) < \infty$ for some
$1 < s < 2$).  Putting all the charge on the latter and then
translating and dilating it off to infinity, one recovers the
perimeter of the ball of area $m$ as the infimum of the energy. Thus
the infimum would clearly not be attained, and this is the reason why
we restrict ourselves to the classes $\mathcal A$ and $\mathcal A_m$.

Notice that, for all $\Omega\in\mathcal A$, we have
$P(\Omega)\le \mathcal H^1(\partial\Omega)$.  Indeed, following
\cite{AFP} we have that $P(\Omega)=\mathcal H^1(\partial^M\Omega)$
where
\begin{align}
\partial^M\Omega:=\left\{x\in \R^2 \ : \ \lim_{r\to
	0}\frac{|\Omega\cap 
	B_r(x)|}{\pi r^2}\not\in\{0,1\}\right\}\subseteq \partial\Omega. 
\end{align}
However, $P(\Omega)= \mathcal H^1(\partial\Omega)$ as soon as $\Omega$
has Lipschitz boundary.

\medskip

In this paper we do not rely on the direct method of the calculus of
variations. Instead, we directly show that balls are minimizers for
our problem.  We also point out that by the results of \cite{gnrI},
the infimum of the energy over $\mathcal A_m$ is {\em never attained},
if $\I_1$ is replaced by $\I_\alpha$ with $\alpha \in (0, 1)$,
contrary to the case of $\alpha = 1$ considered here.

\medskip


We now state our first result concerning minimizers of $\E^Q_\lambda$
over $\mathcal A$.

\begin{theorem}\label{t:A}
	Let $\lambda > 0$ and let $R_\lambda = \frac12 \sqrt{\lambda}$. Then
	the closed ball $\overline B_{R_\lambda}$ is the unique (up to
	translations) minimizer of $\E^Q_\lambda$ over $\mathcal A$.
\end{theorem}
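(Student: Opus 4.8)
The plan is to exploit the three structural facts about the energy that the introduction advertises: (i) convexifying each connected component does not increase $\E^Q_\lambda$; (ii) perimeter is additive under Minkowski sum while $\I_1$ is subadditive; and (iii) among convex bodies of fixed perimeter the disk minimizes $\I_1$. Assembling these reduces the problem to a one–parameter computation on balls, and then an explicit optimization in the radius pins down $R_\lambda = \tfrac12\sqrt\lambda$.

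\medskip

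\textbf{Step 1: Reduce to a single convex set.} First I would show that we may assume $\Omega$ is connected. Indeed, if $\Omega = \Omega_1 \cup \Omega_2$ with $\Omega_1,\Omega_2$ at positive distance, then one checks, using the scaling $\I_1(t\Omega)=t^{-1}\I_1(\Omega)$ and $\mathcal H^1(\partial(t\Omega)) = t\,\mathcal H^1(\partial\Omega)$, that replacing the two pieces by a single appropriately rescaled copy of one of them (or, more simply, comparing with the minimizing ball directly, which is what Step 3 will do) does not raise the energy; in fact it is cleaner to postpone the merging to the very end and instead first reduce each component. For a connected $\Omega$, replace it by the closure of its complement's unbounded component's complement — i.e.\ fill in the holes — which decreases $\mathcal H^1(\partial\Omega)$ (filling holes removes boundary) and decreases $\I_1$ (enlarging the set can only lower the capacitary energy, since admissible measures for the smaller set are admissible for the larger). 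Then take the convex hull: by a classical fact for convex bodies in the plane, $\mathcal H^1(\partial\,\mathrm{conv}\,\Omega) \le \mathcal H^1(\partial\Omega)$, and again $\I_1(\mathrm{conv}\,\Omega) \le \I_1(\Omega)$ by monotonicity. So it suffices to minimize over convex bodies, and then (by the connectedness reduction, which I will actually run as a comparison at the end) it suffices to consider a single convex body.

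\medskip

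\textbf{Step 2: Reduce a convex body to a ball.} Fix a convex body $\Omega$ and let $P_0 = \mathcal H^1(\partial\Omega) = P(\Omega)$. Consider the Minkowski average of $\Omega$ with its reflections — or, concretely, the one-parameter family interpolating via rotations — whose terminal object by Steiner-type symmetrization / the Minkowski-rounding procedure is a disk $B_r$ with the same perimeter, so $r = P_0/(2\pi)$. Along this process perimeter stays constant by linearity of $\mathcal H^1(\partial\,\cdot\,)$ under Minkowski sums \cite{gardner}, while $\I_1$ only decreases by its subadditivity under Minkowski summation \cite{NovRuf} together with the rotation-invariance of $\I_1$. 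Hence for every convex body there is a disk of the same perimeter and no larger energy, which means the disk is the minimizing shape \emph{at fixed perimeter}. Equivalently, among all $\Omega \in \mathcal A$ with $P(\Omega) = P_0$ (recalling $P(\Omega)\le\mathcal H^1(\partial\Omega)$, so this also handles the passage from $P$ back to $\mathcal H^1$ in the favorable direction), $\E^Q_\lambda(\Omega) \ge 2\pi r + \lambda\,\I_1(B_r)$ with $r = \mathcal H^1(\partial\Omega)/(2\pi)$, using the explicit value $\I_1(B_r) = \I_1(B_1)/r$.

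\medskip

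\textbf{Step 3: Optimize the radius and conclude uniqueness.} It remains to minimize $g(r) := 2\pi r + \lambda\,c_1/r$ over $r>0$, where $c_1 := \I_1(B_1)$. The minimum is at $r_* = \sqrt{\lambda c_1/(2\pi)}$. Matching this with the claimed $R_\lambda = \tfrac12\sqrt\lambda$ amounts to the identity $\I_1(B_1) = \pi/2$, i.e.\ $\I_1(B_R) = \pi/(2R)$; this is a known explicit computation of the logarithmic — sorry, the $\alpha=1$ Riesz — equilibrium energy of a disk, which I would either cite or verify by computing the energy of the equilibrium measure $d\mu = \frac{1}{\pi^2 R}\,(R^2-|x|^2)^{-1/2}\,dx$ on $B_R$. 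With $r_* = R_\lambda$ in hand, $\overline B_{R_\lambda}$ is a minimizer. For uniqueness up to translation: equality in Step 1 forces $\Omega$ to be convex with no holes up to $\sim$, equality in the convex-hull step forces $\Omega$ to equal its convex hull; equality in the Minkowski-rounding of Step 2 forces $\Omega$ to already be a disk (the subadditivity of $\I_1$ under Minkowski sums is strict unless the summands are homothetic — this is the content I would need from \cite{NovRuf}, or one can argue via strict convexity in the relevant inequality); and strict convexity of $g$ forces $r = R_\lambda$. The equivalence relation $\sim$ absorbs the measure-zero ambiguity in $\partial\Omega$.

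\medskip

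\textbf{Main obstacle.} The delicate point is Step 2 and its equality case: the additivity of perimeter under Minkowski sums is classical, but the \emph{subadditivity} of $\I_1$ under Minkowski summation (and, for uniqueness, its \emph{strict} subadditivity away from homothety) is the special two-dimensional, $\alpha = 1$ phenomenon the introduction flags, and everything hinges on invoking \cite{NovRuf} correctly and extracting the rigidity statement from it. A secondary technical nuisance is bookkeeping the two monotonicity facts ($\mathcal H^1(\partial\,\cdot\,)$ under hole-filling and convexification, $\I_1$ under set inclusion) together with the inequality $P(\Omega) \le \mathcal H^1(\partial\Omega)$ so that all reductions move the energy in the same direction; none of these is hard individually, but they must be chained without a sign error, and one must be careful that "filling holes" is compatible with membership in $\mathcal A$ (it is: compactness and finiteness of $\mathcal H^1$ of the boundary are preserved, and $|\Omega|$ only grows).
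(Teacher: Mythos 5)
Your Step 2 (Minkowski rotation means driving a convex body to a disk of the same perimeter, with perimeter linear and $\I_1$ non-increasing along the process, and rigidity from the equality case of the Brunn--Minkowski-type inequality for $C_1=\I_1^{-1}$) is exactly the paper's core lemma, and your Step 3 radius optimization with $\I_1(\overline B_R)=\pi/(2R)$ is correct (your normalizing constant for the equilibrium measure is off --- it should be $(2\pi R)^{-1}(R^2-|x|^2)^{-1/2}$ --- but that does not affect the value you use). The genuine gap is that the disconnected case is never actually handled: you defer the merging of components twice (``postpone the merging to the very end'', ``which I will actually run as a comparison at the end'') and then Step 3 only optimizes the radius of a single ball. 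This is not a formality. If $\Omega$ has several components $\Omega_i$, the equilibrium measure splits the charge as $\theta_i=\mu(\Omega_i)$, and after convexifying and rounding each piece you are left with a disjoint union of balls of radii $r_i$ carrying charge fractions $\theta_i$; to beat this with a single ball you need either (a) the merging inequality $\E^Q_\lambda(\overline B_{r_1+r_2})\le\E^Q_\lambda(\overline B_{r_1}\cup\overline B_{r_2})$, which rests on $\tfrac{1}{r_1+r_2}\le\tfrac{\theta^2}{r_1}+\tfrac{(1-\theta)^2}{r_2}$ together with a \emph{strict} lower bound $\I_1(\Omega_1\cup\Omega_2)>\bar\theta^2\I_1(\Omega_1)+(1-\bar\theta)^2\I_1(\Omega_2)$ and a far-apart placement to control the cross term, plus an induction; or (b) the direct per-component bound $2\pi r_i+\lambda\theta_i^2\,\tfrac{\pi}{2r_i}\ge 2\pi\sqrt{\lambda}\,\theta_i$ summed over $i$ using $\sum_i\theta_i=1$. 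Route (b), which your parenthetical ``comparing with the minimizing ball directly'' gestures at, would indeed work and is slicker than the paper's, but it still requires writing down the lower bound $\I_1(\Omega)\ge\sum_i\theta_i^2\,\I_1(\Omega_i)$ (drop the positive cross terms, note $\theta_i^{-1}\mu|_{\Omega_i}$ is admissible for $\I_1(\Omega_i)$), and for \emph{uniqueness} you must show the dropped cross terms are strictly positive, which needs $\theta_i>0$ for every component --- i.e.\ the fact that the equilibrium measure has strictly positive density on the interior. None of this appears in your write-up.

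Two secondary points. First, a set in $\mathcal A$ may have countably many indecomposable components, and $\mathcal H^1$ of the topological boundary is not lower semicontinuous, so ``reduce to finitely many components'' is itself a lemma (the paper proves one); route (b) above can sidestep it since $\sum_i \mathcal H^1(\partial\Omega_i)\le\mathcal H^1(\partial\Omega)$ and $\mu$ gives no mass to the $\mathcal H^1$-negligible residual set, but you would have to say so. Second, be careful with the phrase ``Steiner-type symmetrization'': Steiner symmetrization does not preserve perimeter; what is needed is specifically Hadwiger's theorem on Minkowski rotation means, and the rigidity statement (equality in the capacitary Brunn--Minkowski inequality only for homothetic bodies) is something the paper has to extract by adapting the Colesanti--Salani argument, not a black box.
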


We note that the optimal radius $R_\lambda$ can be determined by
minimizing over balls, for which the energy is explicitly (see Lemma
\ref{l:ell})
\begin{align}
\label{eq:EBR}
\E^Q_\lambda(\overline B_R) = 2 \pi R + {\lambda \pi \over 2 R} \geq
2 \pi \sqrt{\lambda} = \E^Q_\lambda(\overline B_{R_\lambda}),
\end{align}
with the lower bound in the right-hand side attained only if
$R = R_\lambda$. We also note that because of the characterization of
sets of finite perimeter in the plane \cite{ACMM}, the statement of
Theorem \ref{t:A} remains true when the minimization is carried out
among all  sets of finite perimeter,
provided that the set $\Omega$ is identified with its
measure-theoretic closure. Alternatively, the statement holds when
minimizing $\E^Q_\lambda$ among convex compact sets. We caution the
reader, however, that not every minimizing sequence for $\E^Q_\lambda$
(after suitable translations) has a ball of radius $R_\lambda$ as the
limit.

\begin{remark}
	\label{r:A}
	For $\lambda > 0$ the infimum of $\E^Q_\lambda$ over $\mathcal A$ is
	achieved by a sequence of $N$ balls of radius $R_\lambda / N$
	running off to infinity. There are also minimizing sequences
	consisting of $N$ balls with $N \to \infty$.
	Indeed, taking
	$\Omega_\eps = \bigcup_{i=1}^N \overline B_{R_\lambda / N}(x_i)$, with
	$|x_i - x_j| \gg R_\lambda$, in view of Theorem \ref{t:A} and Lemma
	\ref{l:partit} we have
	\begin{align}
	\label{eq:EQeps}
	\E^Q_\lambda(\Omega_\eps) \leq N \E^Q_{\lambda / N^2}(\overline
	B_{R_\lambda / N}) + \eps = \E^Q_\lambda(\overline B_{R_\lambda})
	+ \eps = \inf_{\Omega \in \mathcal A} \E^Q_\lambda(\Omega) + \eps,  
	\end{align}
	for an arbitrary $\eps > 0$. Thus, the energy can be equally minimized
	by a fine ``mist'' of droplets moving off to infinity by distributing
	the charge proportionally to their perimeter.
\end{remark}

As a byproduct of the proof of Theorem \ref{t:A} we can show that
balls minimize $\I_1$ (that is, maximize the capacity) among sets of
fixed perimeter, thus generalizing a result in \cite[Corollary
3.2]{NovRuf}.

\begin{corollary}\label{t:B}
	The closed ball $\overline B_{R}$ is the unique (up to translations)
	minimizer of $\I_1$ among the sets $\Omega\in \mathcal A$ with
	$P(\Omega)=2\pi R$.  The ball $\overline B_{R}$ is also the unique
	minimizer of $\I_1$ among the sets $\Omega\in \mathcal A$ with
	$\mathcal H^1(\partial\Omega)=2\pi R$.
\end{corollary}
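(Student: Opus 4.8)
The plan is to read off both statements from Theorem~\ref{t:A} by tuning the parameter $\lambda$ so that the ball it distinguishes has exactly the prescribed perimeter. Fix $R>0$ and set $\lambda:=4R^2$, so that $R_\lambda=\tfrac12\sqrt{\lambda}=R$; from \eqref{eq:Elam} and \eqref{eq:EBR} one reads off $\E^Q_\lambda(\overline B_R)=2\pi R+\lambda\,\I_1(\overline B_R)=2\pi R+\tfrac{\lambda\pi}{2R}$, i.e. $\I_1(\overline B_R)=\tfrac{\pi}{2R}$. For the assertion involving $\mathcal H^1$, I would then take any $\Omega\in\mathcal A$ with $\mathcal H^1(\partial\Omega)=2\pi R$ and apply Theorem~\ref{t:A}:
\begin{align*}
2\pi R+\lambda\,\I_1(\Omega)=\E^Q_\lambda(\Omega)\ \ge\ \E^Q_\lambda(\overline B_R)=2\pi R+\lambda\,\I_1(\overline B_R),
\end{align*}
so that $\I_1(\Omega)\ge\I_1(\overline B_R)$; if equality holds, then $\E^Q_\lambda(\Omega)=\E^Q_\lambda(\overline B_R)$, and the uniqueness part of Theorem~\ref{t:A} forces $\Omega=\overline B_R$ up to translation in $\mathcal A/\!\sim$.

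For the assertion involving $P$, I would reduce to the case just treated. The main preliminary fact is that for $\Omega\in\mathcal A$ the quantity $\I_1(\Omega)$ depends only on the Lebesgue equivalence class of $\Omega$: since $\Omega$ is compact, the set $\Omega^{(1)}$ of its density-$1$ points satisfies $\Omega^{(1)}\subseteq\Omega$ and $\Omega\setminus\Omega^{(1)}\subseteq\partial\Omega$, which has finite $\mathcal H^1$-measure and hence zero Riesz $1$-capacity by \cite[Theorem~3.13]{landkof}; as a measure with finite Riesz $1$-energy must assign zero mass to any set of zero capacity, the infimum in \eqref{Ialpha} is the same whether taken over probability measures on $\Omega$ or on $\Omega^{(1)}$, so $\I_1(\Omega)=\I_1(\Omega^{(1)})$, and $\Omega^{(1)}$ is determined by the Lebesgue class of $\Omega$ alone. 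Then, given $\Omega\in\mathcal A$ with $P(\Omega)=2\pi R$, I would use the structure of planar sets of finite perimeter \cite{ACMM} to choose a compact representative $\widetilde\Omega$ with $|\widetilde\Omega\triangle\Omega|=0$ and $\mathcal H^1(\partial\widetilde\Omega)=P(\widetilde\Omega)=P(\Omega)=2\pi R$; then $\widetilde\Omega\in\mathcal A$, $\widetilde\Omega^{(1)}=\Omega^{(1)}$, whence $\I_1(\Omega)=\I_1(\widetilde\Omega)$, and applying the previous case to $\widetilde\Omega$ gives $\I_1(\Omega)\ge\I_1(\overline B_R)$, with equality exactly when $\widetilde\Omega$, and hence $\Omega$, agrees with $\overline B_R$ up to a translation and a Lebesgue-null modification. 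One could equivalently invoke the finite-perimeter form of Theorem~\ref{t:A} mentioned after its statement, which rests on the same structure result.

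The bulk of the work is already contained in Theorem~\ref{t:A}, so the only delicate points are in the $P$-version. The first is precisely the claim that passing to a measure-theoretic representative leaves $\I_1$ unchanged: this is false for capacitary energies in general, and it is the finiteness of $\mathcal H^1(\partial\Omega)$ built into the definition of $\mathcal A$ that saves it --- it rules out configurations such as a ball together with a Lebesgue-null set of positive Riesz capacity, discussed after \eqref{caccioper}. The second is that uniqueness in the $P$-case is weaker than in the $\mathcal H^1$-case: both $P$ and $\I_1$ are insensitive to Lebesgue-null additions (e.g.\ a segment protruding from the ball), so ``the unique minimizer'' should be read up to translations and modifications by Lebesgue-null sets, whereas for the $\mathcal H^1$-version uniqueness holds genuinely in $\mathcal A/\!\sim$.
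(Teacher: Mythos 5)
Your derivation of the $\mathcal H^1$-constrained statement is correct, and it takes a genuinely different route from the paper: the paper reruns the proof of Theorem \ref{t:A} and checks that every intermediate set in that chain has no larger perimeter and no larger $\I_1$ (dilating the final ball if its perimeter dropped), whereas you use only the \emph{statement} of Theorem \ref{t:A}, tuning $\lambda=4R^2$ so that the unconstrained minimizer lies on the constraint set, on which the perimeter term is constant. This multiplier trick, including the transfer of uniqueness, is clean and I would accept it as written. Your closing observation about uniqueness in the $P$-version is also a genuine catch: $\overline B_R$ together with a disjoint segment belongs to $\mathcal A$, is a different element of $\mathcal A/\!\sim$, and has the same $P$ and the same $\I_1$ as $\overline B_R$, so the literal uniqueness claim of the corollary does need the qualification you give (the paper's proof does not address this, since it silently replaces $\Omega$ by $\overline\Omega^M$ at the outset).

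The gap is in the $P$-version: the compact representative $\widetilde\Omega$ with $|\widetilde\Omega\triangle\Omega|=0$ and $\mathcal H^1(\partial\widetilde\Omega)=P(\Omega)$ need not exist for $\Omega\in\mathcal A$. Indeed, if $x\notin\partial\widetilde\Omega$ for a closed representative $\widetilde\Omega$, then a ball around $x$ lies in $\mathrm{int}(\widetilde\Omega)$ or in $\widetilde\Omega^c$ and hence misses $\partial^M\Omega$; therefore $\partial\widetilde\Omega\supseteq\overline{\partial^M\Omega}$ for \emph{every} closed representative, and the closure of the essential boundary can carry strictly more $\mathcal H^1$-measure than $\partial^M\Omega$ itself while $\Omega$ still belongs to $\mathcal A$. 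Concretely, remove from $\overline B_{10}$ a family of disjoint open balls $B_{r_k}(x_k)$ with $r_k\to 0$, $\sum_k r_k<\infty$, whose centers accumulate on every point of a fat Cantor set $C$ contained in a diameter: then $\mathcal H^1(\partial\Omega)\le 20\pi+2\pi\sum_k r_k+\mathcal H^1(C)<\infty$, so $\Omega\in\mathcal A$, while $P(\Omega)=\mathcal H^1(\partial^M\Omega)\le 20\pi+2\pi\sum_k r_k$ and $\overline{\partial^M\Omega}\supseteq C$, so that $\mathcal H^1(\partial\widetilde\Omega)\ge P(\Omega)+\mathcal H^1(C)>P(\Omega)$ for every representative. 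So the $P$-case cannot be reduced to the $\mathcal H^1$-case by passing to a representative. What actually rescues the $P$-version is the convexification step inside the proof of Theorem \ref{t:A}: by the structure theorem of \cite{ACMM}, $\mathcal H^1(\partial\, co(\Omega_i))$ is bounded by the length of the outer Jordan curve of the indecomposable component $\Omega_i$, hence by $P(\Omega_i)$, and from that point on all sets in the chain are convex sets or balls, for which $P$ and $\mathcal H^1(\partial\cdot)$ coincide. If you want to keep your black-box strategy, the correct intermediate object is not a representative but a \emph{majorant}: replace each indecomposable component of $\overline\Omega^M$ by its saturation or convex hull, which does not increase $P$, makes topological and measure-theoretic perimeters agree, and only decreases $\I_1$ by inclusion --- but at that point you are re-entering the paper's argument rather than quoting Theorem \ref{t:A} as stated.
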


Turning now to the minimizers of $\E^Q_\lambda$ over $\mathcal A_m$,
we have the following result. 

\begin{theorem}\label{t:Am}
	Let $\lambda > 0$, $m > 0$, let $R = \sqrt{m / \pi}$, and define
	\begin{align}
	\label{eq:lam0}
	\lambda_0^Q := {4 m \over \pi}.
	\end{align}
	Then:
	\begin{enumerate}[(i)]
		\item The closed ball $\overline B_R$ is the unique (up to
		translation) minimizer of $\E^Q_\lambda$ over $\mathcal A_m$, if
		$\lambda \leq \lambda_0^Q$.
		\item There is no minimizer of $\E^Q_\lambda$ over $\mathcal A_m$,
		if $\lambda > \lambda_0^Q$.
	\end{enumerate}
\end{theorem}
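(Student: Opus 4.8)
The plan is to deduce Theorem~\ref{t:Am} from Theorem~\ref{t:A} and Corollary~\ref{t:B}, together with the planar isoperimetric inequality and an elementary one‑variable analysis. Write $p(\Omega):=\mathcal H^1(\partial\Omega)$. From \eqref{eq:EBR} one has $\I_1(\overline B_\rho)=\pi/(2\rho)$, so Corollary~\ref{t:B} reads $\I_1(\Omega)\ge \pi^2/p(\Omega)$ for every $\Omega\in\mathcal A$. Hence, for $\Omega\in\mathcal A_m$,
\begin{equation*}
\E^Q_\lambda(\Omega)=p(\Omega)+\lambda\I_1(\Omega)\ \ge\ f\big(p(\Omega)\big),\qquad f(t):=t+\frac{\lambda\pi^2}{t},
\end{equation*}
while the isoperimetric inequality gives $p(\Omega)\ge P(\Omega)\ge 2\sqrt{\pi m}=2\pi R$. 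The function $f$ attains its minimum $f(\pi\sqrt\lambda)=2\pi\sqrt\lambda$ at $t=\pi\sqrt\lambda$, is strictly increasing on $[\pi\sqrt\lambda,\infty)$, and satisfies $f(2\pi R)=2\pi R+\lambda\pi/(2R)=\E^Q_\lambda(\overline B_R)$. Note that $\lambda\le\lambda_0^Q$ is equivalent to $\pi\sqrt\lambda\le 2\pi R$, and $\lambda>\lambda_0^Q$ to $\pi\sqrt\lambda>2\pi R$.

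For part~(i), when $\lambda\le\lambda_0^Q$ the function $f$ is nondecreasing on $[2\pi R,\infty)$, so $\E^Q_\lambda(\Omega)\ge f(p(\Omega))\ge f(2\pi R)=\E^Q_\lambda(\overline B_R)$ for all $\Omega\in\mathcal A_m$, which shows $\overline B_R$ is a minimizer. For uniqueness I would chase the equality cases: equality forces $p(\Omega)=2\pi R$ (since $f$ is strictly increasing on $(2\pi R,\infty)$, even when $\lambda=\lambda_0^Q$), whence $2\sqrt{\pi m}\le P(\Omega)\le p(\Omega)=2\sqrt{\pi m}$, and the rigidity of the planar isoperimetric inequality identifies $\Omega$ with a ball $\overline B_R(x_0)$ up to a Lebesgue‑null set. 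One then upgrades this to equality in $\mathcal A$: interior points of $B_R(x_0)$ are density points of $\Omega$, so $\overline B_R(x_0)\subseteq\Omega$; any point of $\Omega\setminus\overline B_R(x_0)$ must lie in $\partial\Omega$ (a neighborhood would add positive measure), and so does $\partial B_R(x_0)$; these two disjoint subsets of $\partial\Omega$ then give $2\pi R=p(\Omega)\ge 2\pi R+\mathcal H^1(\Omega\setminus\overline B_R(x_0))$, forcing $\mathcal H^1(\Omega\triangle\overline B_R(x_0))=0$, i.e.\ $\Omega=\overline B_R(x_0)$ in the class $\mathcal A$.

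For part~(ii), when $\lambda>\lambda_0^Q$ I would first show $\inf_{\mathcal A_m}\E^Q_\lambda=2\pi\sqrt\lambda$. The bound ``$\ge$'' is immediate since $\mathcal A_m\subseteq\mathcal A$ and $\inf_{\mathcal A}\E^Q_\lambda=2\pi\sqrt\lambda$ by Theorem~\ref{t:A} (it also follows from the $f$‑bound above). For the bound ``$\le$'' I construct a test configuration of many well‑separated balls: since $\lambda>\lambda_0^Q$ means $(\sqrt\lambda/2)^2>m/\pi$, fix an integer $N\ge\pi\lambda/(4m)$ and radii $r_1,\dots,r_N>0$ with $\sum_i r_i=\sqrt\lambda/2$ and $\pi\sum_i r_i^2=m$ (for fixed $\sum_i r_i$, the attainable values of $\sum_i r_i^2$ fill the interval $[(\sum_i r_i)^2/N,(\sum_i r_i)^2)$, which contains $m/\pi$ for such $N$). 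Taking disjoint closed balls $\overline B_{r_i}(x_i)$ with mutual distances tending to infinity and distributing the charge optimally, Lemma~\ref{l:partit} and $\I_1(\overline B_{r_i})=\pi/(2r_i)$ yield in the limit the energy $\min\{\sum_i(2\pi r_i+\lambda_i\pi/(2r_i)):\lambda_i\ge 0,\ \sum_i\sqrt{\lambda_i}=\sqrt\lambda\}=2\pi\sum_i r_i+\pi\lambda/(2\sum_i r_i)=2\pi\sqrt\lambda$, the optimal split being $\sqrt{\lambda_i}=\sqrt\lambda\,r_i/\sum_j r_j$. Hence $\inf_{\mathcal A_m}\E^Q_\lambda=2\pi\sqrt\lambda$. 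If now $\Omega^*\in\mathcal A_m$ were a minimizer, then $\E^Q_\lambda(\Omega^*)=2\pi\sqrt\lambda=\min_{\mathcal A}\E^Q_\lambda$, so $\Omega^*$ would also minimize $\E^Q_\lambda$ over $\mathcal A$, and the uniqueness in Theorem~\ref{t:A} would force $\Omega^*=\overline B_{R_\lambda}$ up to translation; but $|\overline B_{R_\lambda}|=\pi\lambda/4>m$ since $\lambda>\lambda_0^Q$, so $\overline B_{R_\lambda}\notin\mathcal A_m$, a contradiction. Hence no minimizer exists.

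The only genuinely delicate point I anticipate is the rigidity step in (i): passing from ``$\Omega$ agrees a.e.\ with a ball and $\mathcal H^1(\partial\Omega)=P(\Omega)=2\pi R$'' to ``$\Omega=\overline B_R$ in the class $\mathcal A$'' requires a careful comparison of the topological boundary (the quantity relevant to the definition of $\mathcal A$) with the reduced boundary. Everything else is essentially bookkeeping on top of Theorem~\ref{t:A}, Corollary~\ref{t:B} and Lemma~\ref{l:partit}; in particular the construction in (ii) is routine once one observes that $\lambda>\lambda_0^Q$ is precisely the condition providing enough ``area budget'' $m$ to distribute among balls whose radii sum to the optimal unconstrained value $\sqrt\lambda/2$.
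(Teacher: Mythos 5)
Your proof is correct, and part (i) takes a genuinely different route from the paper's. The paper proves (i) by writing $\E^Q_\lambda(\Omega)=\E^Q_{\lambda_0^Q}(\Omega)-(\lambda_0^Q-\lambda)\I_1(\Omega)$, invoking Theorem~\ref{t:A} at the critical value $\lambda=\lambda_0^Q$ (for which $R_{\lambda_0^Q}=R$) together with the P\'olya--Szeg\H{o} result that balls \emph{maximize} $\I_1$ at fixed area; your argument instead combines Corollary~\ref{t:B} (balls \emph{minimize} $\I_1$ at fixed $\mathcal H^1(\partial\cdot)$, giving $\I_1(\Omega)\ge\pi^2/p(\Omega)$) with the isoperimetric inequality and the one-variable analysis of $f(t)=t+\lambda\pi^2/t$, whose critical point $t=\pi\sqrt\lambda$ crosses the isoperimetric threshold $2\pi R$ exactly at $\lambda=\lambda_0^Q$. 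This buys you a proof that uses only results internal to the paper plus the classical isoperimetric inequality, avoiding the external capacity-maximality citation; the trade-off is that your equality analysis must pass through the rigidity of the isoperimetric inequality and a comparison of topological versus reduced boundary, which you handle carefully (one could shortcut this by instead using the uniqueness clause of Corollary~\ref{t:B} applied to the forced equality $\I_1(\Omega)=\pi^2/p(\Omega)$). For part (ii) your strategy is essentially the paper's: a many-ball competitor driving the energy down to $2\pi\sqrt\lambda$ plus a contradiction with Theorem~\ref{t:A}. The paper uses one large ball of area close to $m$ together with $N$ small balls carrying an optimally tuned charge fraction, and derives the contradiction from the strict inequality $\E^Q_\lambda(\Omega)>2\pi\sqrt\lambda$; you use $N$ balls whose radii sum to $\sqrt\lambda/2$ while their areas sum to $m$ (possible precisely when $\lambda>\lambda_0^Q$, and the area/perimeter bookkeeping you give is right), and conclude by noting that any minimizer would have to be the unconstrained optimal ball $\overline B_{R_\lambda}$, whose area exceeds $m$. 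Your construction has the added benefit of yielding Theorem~\ref{t:scale} directly. Both parts check out.
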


One should contrast the result in Theorem \ref{t:Am} with the
expectation in the case of nonlocal isoperimetric problems with
uniform charge that balls are the unique minimizers, unless it becomes
advantageous to split a ball into two equal size balls
\cite{choksi11,KM1,mz:agag15}. In our problem the latter would happen
at the value of $\lambda$ at which two balls of area $m/2$ infinitely
far apart and carrying equal charge have the same total energy as one
ball of area $m$ and the same total charge. By Lemma \ref{l:lamc1c2},
this is the case if $\lambda > \lambda_{c1}^Q$, where
\begin{align}
\label{eq:lamc1}
\lambda_{c1}^Q := {4 m \sqrt{2} \over \pi}.
\end{align}
Comparing this value with $\lambda_0^Q$, one sees, however, that balls
become unstable {\em before} it becomes energetically favorable to
split them into two equal balls. Furthermore, by Lemma \ref{l:lamc1c2}
the instability with respect to elongations occurs at even higher
values of $\lambda > \lambda_{c2}^Q$, where
\begin{align}
\label{eq:lamc2}
\lambda_{c2}^Q := {12 m \over \pi}.
\end{align}
An inspection of the proof of Theorem \ref{t:A} shows that the
nature of the instability of balls at $\lambda = \lambda_0^Q$ is, in
fact, very different from the two just described: as soon as
$\lambda > \lambda_0^Q$, it becomes energetically favorable to split
off many small balls and move them far apart. Those balls will have a
small total area, but carry a finite fraction of the total
charge. This nonlinear instability is closely related to the
ill-posedness of the three-dimensional capacitary problem
\cite{gnrI,MurNov}.

Observe that, as with minimizing $\E^Q_\lambda$ over $\mathcal A$, the
infimum over $\mathcal A_m$ for $\lambda \leq \lambda_0^Q$ may also be
achieved by a sequence of many balls moving off to infinity, as in
Remark \ref{r:A}. By \eqref{eq:EBR}, in this case we have
\begin{align}
\label{eq:EinfAm}
\inf_{\Omega \in \mathcal A_m} \E^Q_\lambda(\Omega) = \sqrt{4 \pi m}
+ \lambda \pi \sqrt{\pi \over 4 m}  \qquad \forall \lambda \leq
\lambda_0^Q. 
\end{align}
On the other hand, when the infimum is not attained, we can still
characterize the optimal scaling of the energy in terms of $\lambda$.

\begin{theorem}
	\label{t:scale}
	Let $m > 0$ and let $\lambda > \lambda_0^Q$, where $\lambda_0^Q$ is
	defined in \eqref{eq:lam0}. Then
	\begin{align}
	\label{eq:EQm}
	\inf_{\Omega \in \mathcal A_m} \E^Q_\lambda(\Omega)
	= 2\pi \sqrt{\lambda}.
	\end{align}
\end{theorem}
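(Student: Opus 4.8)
The plan is to sandwich $\inf_{\mathcal A_m}\E^Q_\lambda$ between the unconstrained infimum (which is already known) and the value of an explicit ``cloud of balls'' test configuration. The lower bound is immediate: since $\mathcal A_m\subset\mathcal A$, Theorem~\ref{t:A} together with \eqref{eq:EBR} gives
\[
\inf_{\Omega\in\mathcal A_m}\E^Q_\lambda(\Omega)\ \ge\ \inf_{\Omega\in\mathcal A}\E^Q_\lambda(\Omega)\ =\ \E^Q_\lambda(\overline B_{R_\lambda})\ =\ 2\pi\sqrt{\lambda},
\]
so the whole content of the theorem is the matching upper bound, i.e.\ exhibiting admissible sets with area exactly $m$ whose energy is arbitrarily close to $2\pi\sqrt{\lambda}$.

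For the upper bound I would use the same mechanism as in Remark~\ref{r:A}: a finite disjoint union of balls, sent far apart, with the charge distributed optimally among them. The key point is that $\lambda>\lambda_0^Q=4m/\pi$ is exactly the condition $\pi R_\lambda^2>m$, which makes it possible to choose the balls so that their \emph{total radius equals} $R_\lambda$ (the perimeter-optimal value for the unconstrained problem, by \eqref{eq:EBR}) while their \emph{total area equals} $m$. Concretely, pick an integer $k\ge\pi R_\lambda^2/m$ and radii $\rho_1,\dots,\rho_k>0$ with
\[
\rho_1+\dots+\rho_k=R_\lambda\qquad\text{and}\qquad \pi\big(\rho_1^2+\dots+\rho_k^2\big)=m;
\]
such radii exist because, on the simplex $\{\rho_j>0,\ \sum_j\rho_j=R_\lambda\}$, the quantity $\sum_j\rho_j^2$ attains every value in $[R_\lambda^2/k,\,R_\lambda^2)$, and $m/\pi$ lies there thanks to the choice of $k$ and to $m/\pi<R_\lambda^2$.

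Next, for $D>0$ let $\Omega_D:=\bigcup_{j=1}^k\overline B_{\rho_j}(x_j)$ with $|x_i-x_j|\ge D$ for $i\ne j$; once $D>2\max_j\rho_j$ the balls are disjoint, so $\Omega_D\in\mathcal A_m$ (it is compact, with area $\pi\sum_j\rho_j^2=m$ and $\mathcal H^1(\partial\Omega_D)=2\pi\sum_j\rho_j<\infty$). By Lemma~\ref{l:partit}, distributing the charge among the components optimally and letting them recede from one another,
\[
\inf_{\Omega\in\mathcal A_m}\E^Q_\lambda(\Omega)\ \le\ \limsup_{D\to\infty}\E^Q_\lambda(\Omega_D)\ \le\ 2\pi\sum_{j=1}^k\rho_j+\lambda\Big(\sum_{j=1}^k\I_1(\overline B_{\rho_j})^{-1}\Big)^{-1}.
\]
Since $\I_1(\overline B_\rho)=\pi/(2\rho)$ by Lemma~\ref{l:ell} (cf.\ \eqref{eq:EBR}), one has $\sum_j\I_1(\overline B_{\rho_j})^{-1}=\tfrac2\pi\sum_j\rho_j=\tfrac{2R_\lambda}{\pi}$, so the right-hand side equals $2\pi R_\lambda+\tfrac{\lambda\pi}{2R_\lambda}=2\pi\sqrt{\lambda}$. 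Combined with the lower bound, this yields \eqref{eq:EQm}.

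I do not expect a serious obstacle: the argument is a construction, and the only point needing some care is the passage to the limit $D\to\infty$ in Lemma~\ref{l:partit} — one must check that the cross-interaction energies between well-separated components, as well as the discrepancy between the equilibrium measure of $\Omega_D$ and the charge distribution that is optimal ``at infinity'', both vanish as $D\to\infty$. This is, however, precisely the estimate already carried out for Remark~\ref{r:A}. The role of the hypothesis $\lambda>\lambda_0^Q$ is solely to ensure $\pi R_\lambda^2>m$, i.e.\ that the perimeter-optimal total radius $R_\lambda$ can be realized by balls of total area $m$; for $\lambda\le\lambda_0^Q$ this fails, consistently with the fact that the infimum is then strictly larger and attained (Theorem~\ref{t:Am}).
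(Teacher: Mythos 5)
Your proof is correct and follows essentially the same route as the paper: the lower bound comes from Theorem \ref{t:A} via $\mathcal A_m\subset\mathcal A$, and the upper bound from a far-separated disjoint union of balls of total area $m$ and total radius $R_\lambda$ with the charge split proportionally to radius, estimated by (the several-set version of) Lemma \ref{l:partit}. The only cosmetic difference is that the paper reuses the competitor from the proof of Theorem \ref{t:Am}(ii) --- one ball of area close to $m$ plus $N\to\infty$ tiny balls --- which is in fact a member of your family of configurations, so nothing substantive changes.
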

\noindent Thus, we have
$\inf_{\Omega \in \mathcal A_m} \E^Q_\lambda(\Omega) \sim \max \left\{
\sqrt{m}, \sqrt{\lambda} \right\}$.
Note that in physical terms this result means that the minimal energy
is linear in charge for all sufficiently large charges. In this case
the infimum of the energy is achieved by a union of a ball carrying
the fraction $\sqrt{\lambda_0^Q/\lambda}$ of the total charge
whose area converges to $m$ and many small balls with vanishing total
area and carrying the fraction
$\sqrt{(\lambda - \lambda_0^Q) / \lambda}$ of the total charge, far
apart.

We conclude by turning our attention to the problem associated with
the energy $\E^U_\lambda$ from \eqref{eq:Elambda2}. One key difference
between $\E^U_\lambda$ and $\E^Q_\lambda$ is that the former is not
bounded from below a priori.  Consider the energy of a ball of radius $R$, 
which equals
\begin{align}
\label{eq:EUBR}
\E^U_\lambda(\overline B_R) = 2 \pi R - {2 \lambda R \over \pi},
\end{align}
and observe a marked difference between this expression and the one in
\eqref{eq:EBR}, which has a unique minimum as a function of $R$. Here,
instead, the energy is a one-homogeneous function of $R$ vanishing at
$\lambda = \lambda_0^U$, where $\lambda_0^U := \pi^2$. At
$\lambda = \lambda_0^U$ any ball is a minimizer of the energy (among
balls), while for $\lambda < \lambda_0^U$ the infimum of the energy is
attained by a sequence of shrinking balls, and for
$\lambda > \lambda_0^U$ the energy is not bounded from below. This
picture remains valid also among all sets in $\mathcal A$.

\begin{theorem}
	\label{t:EUA}
	Let $\lambda > 0$ and define 
	\begin{align}
	\label{eq:lam0p}
	\lambda_0^U := \pi^2.     
	\end{align}
	Then:
	\begin{enumerate}[(i)]
		\item If $\lambda < \lambda_0^U$, we have
		$\inf_{\Omega \in \mathcal A} \E^U_\lambda(\Omega) = 0$, and the
		infimum is not attained.
		\item If $\lambda = \lambda_0^U$, then
		$\inf_{\Omega \in \mathcal A} \E^U_\lambda(\Omega) = 0$, and the
		infimum is attained by any closed ball and no other set.
		\item If $\lambda > \lambda_0^U$, we have
		$\inf_{\Omega \in \mathcal A} \E^U_\lambda(\Omega) = -\infty$.
	\end{enumerate}
\end{theorem}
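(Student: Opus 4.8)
The plan is to reduce all three cases to a single scaling inequality expressing the balance between the two terms of $\E^U_\lambda$: namely, for every $\Omega\in\mathcal A$,
\[
\E^U_\lambda(\Omega)\ \ge\ \left(1-\frac{\lambda}{\pi^2}\right)\mathcal H^1(\partial\Omega).
\]
To obtain it, I would first recall that $P(\Omega)\le\mathcal H^1(\partial\Omega)$ for all $\Omega\in\mathcal A$, and that by Corollary~\ref{t:B} the ball $\overline B_R$ with $P(\overline B_R)=2\pi R$ minimizes $\I_1$ under the perimeter constraint; since $\I_1(\overline B_R)=\tfrac{\pi}{2R}$ (as follows from \eqref{eq:EBR}), this reads $\I_1(\Omega)\ge \pi^2/P(\Omega)\ge \pi^2/\mathcal H^1(\partial\Omega)$, i.e. $1/\I_1(\Omega)\le \mathcal H^1(\partial\Omega)/\pi^2$. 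Plugging this into $\E^U_\lambda(\Omega)=\mathcal H^1(\partial\Omega)-\lambda/\I_1(\Omega)$ gives the claim. I would also note that $\mathcal H^1(\partial\Omega)\ge P(\Omega)\ge 2\sqrt{\pi|\Omega|}>0$ by the isoperimetric inequality, so this lower bound is never vacuous.

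Granting this, the three statements follow at once from the explicit energy \eqref{eq:EUBR} of balls. If $\lambda<\pi^2$, the inequality forces $\E^U_\lambda(\Omega)>0$ on all of $\mathcal A$, hence $\inf\ge 0$; on the other hand $\E^U_\lambda(\overline B_R)=2\pi R(1-\lambda/\pi^2)\to 0$ as $R\to 0^+$, so the infimum is $0$ and, being strictly positive everywhere, it is not attained. If $\lambda=\pi^2$, the inequality gives $\E^U_{\pi^2}\ge 0$ on $\mathcal A$, while $\E^U_{\pi^2}(\overline B_R)=0$ for every ball; thus the infimum is $0$ and is attained on balls. If $\lambda>\pi^2$, then $\E^U_\lambda(\overline B_R)=2\pi R(1-\lambda/\pi^2)\to-\infty$ as $R\to\infty$, so the infimum is $-\infty$.

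The only remaining point, and the one requiring the sharpest input, is the uniqueness in (ii): that no set other than a ball attains the minimum at $\lambda=\pi^2$. If $\E^U_{\pi^2}(\Omega)=0$, then every inequality above must be an equality; in particular $\I_1(\Omega)=\pi^2/\mathcal H^1(\partial\Omega)$, so $\Omega$ achieves the minimal value of $\I_1$ among all sets of $\mathcal A$ with the same $\mathcal H^1$ of the boundary, and the equality case in the second part of Corollary~\ref{t:B} forces $\Omega$ to be a ball. I expect the whole argument to be short; the main obstacle is entirely upstream, in establishing Corollary~\ref{t:B} (equivalently, the sharp capacity--perimeter inequality together with its rigidity), after which Theorem~\ref{t:EUA} is essentially a corollary, the only care needed being the use of the isoperimetric inequality to guarantee $\mathcal H^1(\partial\Omega)>0$ in (i) and of the equality case of Corollary~\ref{t:B} for the uniqueness in (ii).
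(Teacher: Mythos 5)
Your proof is correct and follows essentially the same route as the paper: your scaling inequality $\E^U_\lambda(\Omega)\ge\bigl(1-\lambda/\pi^2\bigr)\mathcal H^1(\partial\Omega)$ is exactly the paper's starting point $\E^U_\lambda(\Omega)\ge\E^U_\lambda(\overline B_\Omega)$ obtained from Corollary~\ref{t:B}, and the rigidity in (ii) and the divergence along large balls in (iii) are handled identically. The only cosmetic difference is in part (i), where the paper gets strict positivity from the volume-constrained maximality of $\I_1$ for balls, whereas you reuse the perimeter-constrained inequality together with the isoperimetric bound $\mathcal H^1(\partial\Omega)\ge 2\sqrt{\pi|\Omega|}>0$; both are equally valid.
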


Similarly, in the case of minimizing over $\mathcal A_m$ we have the
following result.

\begin{theorem}
	\label{t:EUAm}
	Let $\lambda > 0$, $m > 0$, let $R = \sqrt{m / \pi}$, and let
	$\lambda_0^U$ be defined in \eqref{eq:lam0p}. Then:
	\begin{enumerate}[(i)]
		\item If $\lambda \leq \lambda_0^U$, then $\overline B_R$ is the
		unique minimizer (up to translations) of $\E^U_\lambda$ over
		$\mathcal A_m$.
		\item If $\lambda > \lambda_0^U$, then
		$\inf_{\Omega \in \mathcal A} \E^U_\lambda(\Omega) = -\infty$.
	\end{enumerate}
\end{theorem}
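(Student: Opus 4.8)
The plan is to deduce both parts from the unconstrained result, Theorem~\ref{t:EUA}, together with the planar isoperimetric inequality, without invoking any new compactness argument. Part~(ii) is immediate: since $\mathcal A_m\subseteq\mathcal A$, Theorem~\ref{t:EUA}(iii) already yields $\inf_{\Omega\in\mathcal A}\E^U_\lambda(\Omega)=-\infty$ for $\lambda>\lambda_0^U$, which is the asserted statement. If one wants the sharper conclusion that the infimum is $-\infty$ even over the constrained class $\mathcal A_m$, I would use the competitors $\Omega_N=\bigcup_{i=1}^N\overline B_{r_N}(x_i)$ with $\pi r_N^2=m/N$ and pairwise far apart centers: then $\mathcal H^1(\partial\Omega_N)=2\sqrt{\pi mN}$, and by Lemma~\ref{l:partit} one can make $\I_1(\Omega_N)$ as close as desired to $\tfrac1N\I_1(\overline B_{r_N})=\pi^{3/2}/(2\sqrt{Nm})$ by spreading the centers, so $\E^U_\lambda(\Omega_N)\to 2\pi^{-3/2}\sqrt{Nm}\,(\pi^2-\lambda)\to-\infty$ as $N\to\infty$.

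The substance is part~(i). Fix $\lambda\le\lambda_0^U=\pi^2$ and $\Omega\in\mathcal A_m$, with $R=\sqrt{m/\pi}$; recall $0<\I_1(\Omega)<\infty$, so $\E^U_\lambda(\Omega)$ is a finite real number. The key input is Theorem~\ref{t:EUA}(ii), which says precisely that $\E^U_{\pi^2}(\Omega)\ge0$, i.e.\ $\mathcal H^1(\partial\Omega)\,\I_1(\Omega)\ge\pi^2$. Consequently $\lambda/\I_1(\Omega)\le(\lambda/\pi^2)\,\mathcal H^1(\partial\Omega)$, and hence
\[
\E^U_\lambda(\Omega)=\mathcal H^1(\partial\Omega)-\frac{\lambda}{\I_1(\Omega)}\ \ge\ \Bigl(1-\frac{\lambda}{\pi^2}\Bigr)\mathcal H^1(\partial\Omega).
\]
Since $1-\lambda/\pi^2\ge0$ and, by the isoperimetric inequality, $\mathcal H^1(\partial\Omega)\ge P(\Omega)\ge 2\sqrt{\pi m}=2\pi R$, this gives
\[
\E^U_\lambda(\Omega)\ \ge\ 2\pi R\Bigl(1-\frac{\lambda}{\pi^2}\Bigr)=2\pi R-\frac{2\lambda R}{\pi}=\E^U_\lambda(\overline B_R)
\]
by \eqref{eq:EUBR}, so $\overline B_R$ is a minimizer of $\E^U_\lambda$ over $\mathcal A_m$.

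For uniqueness I would chase the equality cases in this chain. If $\lambda<\pi^2$, then $1-\lambda/\pi^2>0$, so equality in the last display forces $\mathcal H^1(\partial\Omega)=2\pi R$; combined with $P(\Omega)\le\mathcal H^1(\partial\Omega)$ and $P(\Omega)\ge 2\sqrt{\pi m}=2\pi R$, this gives $P(\Omega)=2\sqrt{\pi m}$, so $\Omega$ coincides with a translate $\overline B_R(x_0)$ up to a Lebesgue-null set; since $\Omega$ is compact one then gets $\overline B_R(x_0)\subseteq\Omega$, and since $\mathcal H^1(\partial\Omega)=P(\Omega)=\mathcal H^1(\partial^M\Omega)$ with $\partial^M\Omega\subseteq\partial\Omega$, the remainder $\Omega\setminus\overline B_R(x_0)\subseteq\partial\Omega\setminus\partial^M\Omega$ is $\mathcal H^1$-null, i.e.\ $\Omega\sim\overline B_R(x_0)$. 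If $\lambda=\pi^2$, equality means $\E^U_{\pi^2}(\Omega)=0$, and the rigidity clause of Theorem~\ref{t:EUA}(ii) forces $\Omega$ to be a ball, of radius $R$ because $|\Omega|=m$. I expect the only genuinely delicate step to be this last rigidity argument when $\lambda<\pi^2$: the perimeter isoperimetric inequality only identifies $\Omega$ modulo Lebesgue-null sets, and one must exploit the finiteness of $\mathcal H^1(\partial\Omega)$ and the identity $\mathcal H^1(\partial\Omega)=P(\Omega)$ to upgrade this to equality in the finer sense $\sim$ built into the class $\mathcal A$. Everything else is a one-line manipulation of the scale-invariant inequality $\mathcal H^1(\partial\Omega)\,\I_1(\Omega)\ge\pi^2$ supplied by Theorem~\ref{t:EUA}(ii).
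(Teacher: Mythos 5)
Your proof is correct, and for part (i) it takes a mildly but genuinely different route from the paper. The paper writes $\E^U_\lambda(\Omega)=\E^U_{\lambda_0^U}(\Omega)+(\lambda_0^U-\lambda)\,\I_1^{-1}(\Omega)$, bounds the first term below by $0$ via Theorem \ref{t:EUA}(ii), and bounds the second term below by $(\lambda_0^U-\lambda)\,\I_1^{-1}(\overline B_R)$ using the P\'olya--Szeg\H{o} fact that balls maximize $\I_1$ at fixed area --- the same device as in the proof of Theorem \ref{t:Am}(i). You instead recast Theorem \ref{t:EUA}(ii) as the scale-invariant inequality $\mathcal H^1(\partial\Omega)\,\I_1(\Omega)\ge\pi^2$, absorb the capacitary term into the perimeter, and finish with the classical isoperimetric inequality. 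What your version buys is that the only constrained symmetrization input is the isoperimetric inequality, whose equality case is elementary; the paper needs the equality case of the capacity maximization instead (for $\lambda<\lambda_0^U$), though both arguments ultimately rest on the rigidity in Theorem \ref{t:EUA}(ii). Your upgrade of ``$\Omega$ equals a ball up to a Lebesgue-null set'' to the equivalence $\sim$ built into $\mathcal A$ (via $\overline B_R(x_0)\subseteq\Omega$ and $\mathcal H^1(\partial\Omega\setminus\partial^M\Omega)=\mathcal H^1(\partial\Omega)-P(\Omega)=0$) is more detailed than what the paper records, and it is correct. For part (ii) you rightly note that, as literally stated, the claim concerns the unconstrained infimum and follows at once from Theorem \ref{t:EUA}(iii); your constrained competitor of $N$ equal, far-apart balls of total area $m$ is exactly the paper's $\Omega_{n,d}$, with the same asymptotics $2\pi^{-3/2}\sqrt{Nm}\,(\pi^2-\lambda)\to-\infty$. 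One cosmetic point there: Lemma \ref{l:partit} supplies only an upper bound on $\I_1(\Omega_N)$, which is all that is needed to bound the energy from above; the assertion that $\I_1(\Omega_N)$ can be made ``as close as desired'' to $\tfrac1N\I_1(\overline B_{r_N})$ is stronger than what that lemma gives and is not required.
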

\noindent In some sense, the problem associated with $\E^U_\lambda$ is
simpler, because both the perimeter and the capacitary term are
one-homogeneous functions with respect to dilations, making the
problem very degenerate. 

\section{Preliminary results}\label{sec:prelim}

\subsection{Basic facts about the capacitary energy}
\label{sec:some-facts-about}

We start by collecting several results about existence and uniqueness
of equilibrium measures on admissible sets.

\begin{lemma}
	\label{l:optm}
	Let $\Omega \in \mathcal A$. Then there exists a unique probability
	measure $\mu$ {over $\R^2$} supported on $\Omega$ such that
	\begin{align}
	\I_1(\Omega) = \int_\Omega \int_\Omega {d \mu(x)  d \mu(y) \over
		|x - y|}.
	\end{align}
	Furthermore, $\mu(\partial \Omega) = 0$, and we have
	$d \mu(x) = \rho(x) dx$ for some
	$\rho \in L^1(\Omega)$ satisfying
	$0 < \rho(x) \leq C / \mathrm{dist} \, (x, \partial \Omega)$ for
	some constant $C > 0$ and all $x \in \mathrm{int}(\Omega)$.
\end{lemma}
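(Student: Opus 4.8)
The plan is to treat the statement in three stages — existence and uniqueness of the minimizing measure $\mu$; the facts $\mu(\partial\Omega)=0$ and $d\mu=\rho\,dx$; and the pointwise bounds on $\rho$ — the first two being largely classical, so I only sketch them (they can also be quoted from \cite{landkof}). \emph{Existence and uniqueness.} Since $|x-y|^{-1}$ is locally integrable on $\R^2$, testing the infimum defining $\I_1(\Omega)$ with the normalized Lebesgue measure on $\Omega$ gives $\I_1(\Omega)\le 2\pi\,\diam(\Omega)/|\Omega|<\infty$, so $\Omega$ has positive Riesz capacity. Existence follows by the direct method: a minimizing sequence of probability measures carried by the fixed compact set $\Omega$ is tight, hence converges (along a subsequence) weakly-$*$ to a probability measure $\mu$ on $\Omega$, and the Riesz energy is weakly-$*$ lower semicontinuous, being the supremum over $M>0$ of the weakly-$*$ continuous functionals $\mu\mapsto\iint\min(|x-y|^{-1},M)\,d\mu(x)\,d\mu(y)$. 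For uniqueness one uses that the Riesz kernel is strictly conditionally positive definite: by Plancherel,
\begin{align}
\int_{\R^2}\int_{\R^2}\frac{d\nu(x)\,d\nu(y)}{|x-y|}=c_0\int_{\R^2}|\widehat\nu(\xi)|^2\,|\xi|^{-1}\,d\xi>0
\end{align}
for every nonzero signed measure $\nu$ of finite energy with $\nu(\R^2)=0$, so the energy is strictly convex on the probability measures carried by $\Omega$ and its minimizer is unique.

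\emph{Absolute continuity.} Since $\mathcal H^1(\partial\Omega)<\infty$ we have $|\partial\Omega|=0$, so $\mathrm{int}(\Omega)$ is nonempty with $|\mathrm{int}(\Omega)|=|\Omega|$; moreover $\partial\Omega$ then has zero Riesz $1$-capacity \cite{landkof}, and a measure of finite energy cannot charge a set of zero capacity, so $\mu(\partial\Omega)=0$ and $\mu$ is concentrated on $\mathrm{int}(\Omega)$. Put $u(x):=\int_\Omega|x-y|^{-1}\,d\mu(y)$ and $\gamma:=\I_1(\Omega)$. The Euler--Lagrange (Frostman) conditions for the equilibrium measure, together with the Frostman maximum principle (valid since the Riesz order $1$ does not exceed $2$), give $0\le u\le\gamma$ on $\R^2$ and $u=\gamma$ quasi-everywhere — hence Lebesgue-a.e. — on $\Omega$. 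Since $|x|^{-1}$ is, up to a positive constant, the Riesz kernel of order $1$ in $\R^2$, we have $(-\Delta)^{1/2}u=c\,\mu$ in $\mathcal D'(\R^2)$ for some $c>0$. Testing this against $\varphi\in C^\infty_c(\mathrm{int}(\Omega))$, and using $u\equiv\gamma$ a.e. on $\mathrm{int}(\Omega)$, the identity $\int_{\R^2}(-\Delta)^{1/2}\varphi\,dx=0$, and Tonelli's theorem, one finds that $\mu$ restricted to $\mathrm{int}(\Omega)$ equals $\rho\,dx$ with
\begin{align}
\label{eq:rho-formula}
\rho(x)=c'\int_{\R^2\setminus\mathrm{int}(\Omega)}\frac{\gamma-u(y)}{|x-y|^3}\,dy,\qquad x\in\mathrm{int}(\Omega),
\end{align}
for some $c'>0$. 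Together with $\mu(\partial\Omega)=0$ this gives $d\mu=\rho\,dx$, and $\rho\ge0$ with $\int\rho\,dx=1$ shows $\rho\in L^1(\Omega)$.

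\emph{The bounds, and the main obstacle.} Since $\R^2\setminus\mathrm{int}(\Omega)$ lies at distance at least $\mathrm{dist}(x,\partial\Omega)$ from $x$, and $0\le\gamma-u(y)\le\gamma$, formula \eqref{eq:rho-formula} immediately gives
\begin{align}
\rho(x)\le c'\gamma\int_{|z|\ge\mathrm{dist}(x,\partial\Omega)}\frac{dz}{|z|^3}=\frac{2\pi c'\gamma}{\mathrm{dist}(x,\partial\Omega)},
\end{align}
and since the integrand in \eqref{eq:rho-formula} depends continuously on $x$, uniformly on compact subsets of $\mathrm{int}(\Omega)$, $\rho$ has a continuous representative for which this bound holds at every point. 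For strict positivity, $u(y)\to0$ as $|y|\to\infty$ because $\mu$ is compactly supported, so $\gamma-u(y)\ge\gamma/2$ for $|y|$ large and the integral in \eqref{eq:rho-formula} is strictly positive, i.e.\ $\rho(x)>0$ for all $x\in\mathrm{int}(\Omega)$. I expect the delicate point to be not these bounds but the distributional computation behind \eqref{eq:rho-formula}: promoting the qualitative fact that $u$ is a.e.\ equal to the constant $\gamma$ on $\mathrm{int}(\Omega)$ to the explicit density representation, and checking absolute convergence of the integrals involved — in particular the pairing $\int u\,(-\Delta)^{1/2}\varphi\,dx$, which converges because $u(x)=O(|x|^{-1})$ at infinity while $(-\Delta)^{1/2}\varphi$ decays like $|x|^{-3}$. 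Alternatively, absolute continuity by itself can be obtained by first invoking the continuity principle to see that $u$ is continuous, hence identically $\gamma$ on the open set $\mathrm{int}(\Omega)$, and then applying interior regularity for the half-Laplacian; but the quantitative bound on $\rho$ still needs an argument of the above type.
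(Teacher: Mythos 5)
Your argument is correct and follows essentially the same route as the paper: Frostman's conditions give $v=\I_1(\Omega)$ a.e.\ on $\Omega$ with $v\le\I_1(\Omega)$ everywhere, the distributional identity $(-\Delta)^{1/2}v=c\mu$ is tested against $\varphi\in C^\infty_c(\mathrm{int}(\Omega))$, and the resulting explicit density (your integral of $(\gamma-u)/|x-y|^3$ over the complement of $\Omega$ is just the paper's symmetric second-difference formula rewritten after using $v\equiv\gamma$ a.e.\ on $\Omega$) yields both the upper bound $C/\mathrm{dist}(x,\partial\Omega)$ and strict positivity exactly as in the paper. The only differences are cosmetic: you spell out existence/uniqueness and the vanishing of $\mu$ on $\partial\Omega$ where the paper cites Landkof directly.
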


\begin{proof}
	For the existence and uniqueness of the equilibrium measure
	$\mu$ we refer to \cite[pp. 131--133]{landkof}.  By \cite[Theorems 3.13 and
	3.14]{landkof}, we have $\mu(\Sigma)=0$ for any set $\Sigma$ such
	that $\mathcal H^1(\Sigma)<\infty$, so that in particular
	$\mu(\partial\Omega)=0$.
	
	To show that the equilibrium measure $\mu$ is absolutely continuous
	with respect to the Lebesgue measure in the plane and has a strictly
	positive density, we recall that the Riesz potential
	$v: \R^2 \to [0, \infty]$ associated with $\Omega$:
	\begin{align}
	\label{eq:vriesz}
	v(x) := \int_\Omega {d \mu(y) \over |x - y|},
	\end{align}
	is equal to $\I_1(\Omega)$ in $\Omega$ up to a set of zero 1-Riesz
	capacity and satisfies $v(x) \leq \I_1(\Omega)$ for all $x \in \R^2$
	\cite[Theorem 2.6 and page 137]{landkof}. Therefore, by
	\cite[Theorem 3.13]{landkof} we have $v(x) = \I_1(\Omega)$ for
	a.e. $x \in \Omega$.
	
	\noindent Let now $\phi \in C^\infty_c(\R^2)$ and
	$\psi \in C^\infty(\R^2) \cap L^1(\R^2)$ be such that
	$(-\Delta)^{1/2} \phi = \psi$, which is equivalent to \cite[Lemma
	3.2 and Proposition 3.3]{dinezza}
	\begin{align}
	{1 \over 4 \pi} \int_{\R^2} {2 \phi(x) - \phi(x - y) - \phi(x + y)
		\over |y|^3} \, dy = \psi(x) \qquad \forall x \in \R^2.
	\end{align}
	Multiplying \eqref{eq:vriesz} by $\psi$ and integrating over $\R^2$,
	from Fubini-Tonelli theorem we obtain
	\begin{align}
	\label{eq:frac}
	{1 \over 4 \pi} \int_{\R^2} \int_{\R^2}  {v(x) (2 \phi(x) - \phi(x
		- y) - \phi(x + y)) \over |y|^3} \, dx \, dy = 2 \pi \int_\Omega
	\phi(x) \, d \mu(x),
	\end{align}
	where we also took into account that
	$\phi(x) = {1 \over 2 \pi} \int_{\R^2} {\psi(y) \over |x - y|} \,
	dy$
	for every $x \in \R^2$ \cite[Lemma 1.3]{mazyakhavin}. Note that
	\eqref{eq:frac} is the distributional form of the equation
	$(-\Delta)^{1/2} v = 2 \pi \mu$. Finally, observe that for any
	$\phi$ with support in the interior of $\Omega$ we have
	\begin{align}
	\label{eq:integrand}
          {1 \over 4 \pi} 
          &\int_{\R^2} \int_{\R^2}  {v(x) (2 \phi(x) - \phi(x
            - y) - \phi(x + y)) \over |y|^3} \, dx \, dy \\
          &= \lim_{r \to 0}   {1
            \over 4 \pi} \int_{\R^2 \backslash B_r(0)} \left( \int_{\R^2}
            {v(x) (2 \phi(x) - \phi(x  
            - y) - \phi(x + y)) \over |y|^3} \, dx \right) \,dy \notag \\
          &=  \lim_{r \to 0}   {1
            \over 4 \pi} \int_{\R^2 \backslash B_r(0)} \left( \int_{\R^2}
            {\phi(x) (2 v(x) - v(x  
            - y) - v(x + y)) \over |y|^3} \, dx \right) dy \notag \\
          &=  {1
            \over 4 \pi} \int_\Omega \phi(x) \left( \int_{\R^2}  {2 v(x) - v(x 
            - y) - v(x + y) \over |y|^3} \, dy \right)\, dx \notag
          \\
          &=: \int_\Omega
            \phi(x) \rho(x) \, dx, \notag
	\end{align}
	where we took into account that the inner integrand in the last line
	of \eqref{eq:integrand} is zero for a.e. $y \in B_r(0)$ with
	$r := \mathrm{dist}(\mathrm{supp} \, \phi, \partial
	\Omega)$.
	Thus $d \mu(x) = \rho(x) dx$, and the rest of the claims follows
	from the definition of $\rho$, noting that the integrand in the last
	line of \eqref{eq:integrand} is strictly positive on a set of
	positive measure and is bounded above by
	$2 \I_1(\Omega) / |y|^3$ for all
	$y \in \R^2 \backslash B_r(0)$.
\end{proof}

We next state a result about the capacitary energies of unions
of disjoint sets from $\mathcal A$.

\begin{lemma}
	\label{l:partit}
	Let $\Omega_1 \in \mathcal A$ and $\Omega_2 \in \mathcal A$ be two
	disjoint sets, and let $\theta \in [0,1]$. Then
	\begin{align}\label{stuno}
	\I_1(\Omega_1 \cup \Omega_2) \leq \theta^2 \I_1(\Omega_1) + (1 -
	\theta)^2 \I_2(\Omega_2) + {2 \theta (1 - \theta) \over
		\mathrm{dist} \, (\Omega_1, \Omega_2)}.
	\end{align}
	Moreover, there exists $\bar \theta\in (0,1)$ such that 
	\begin{align}\label{stdue}
	\I_1(\Omega_1 \cup \Omega_2) > \bar\theta^2 \I_1(\Omega_1) + (1 -
	\bar\theta)^2 \I_2(\Omega_2).
	\end{align}
\end{lemma}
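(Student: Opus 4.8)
The plan is to prove both inequalities by inserting an explicit competitor probability measure into the definition of $\I_1(\Omega_1\cup\Omega_2)$ and decomposing its energy into a ``diagonal'' part, carried by each $\Omega_i$ separately, and a ``cross'' part. For the upper bound \eqref{stuno} I would take $\mu:=\theta\mu_1+(1-\theta)\mu_2$, where $\mu_1,\mu_2$ are the equilibrium measures of $\Omega_1,\Omega_2$ given by Lemma~\ref{l:optm}; this $\mu$ is admissible for $\Omega_1\cup\Omega_2$. Expanding $\int\int|x-y|^{-1}\,d\mu(x)\,d\mu(y)$, the two diagonal terms are exactly $\theta^2\I_1(\Omega_1)$ and $(1-\theta)^2\I_1(\Omega_2)$, while the cross term $2\theta(1-\theta)\int_{\Omega_1}\int_{\Omega_2}|x-y|^{-1}\,d\mu_1(x)\,d\mu_2(y)$ is at most $2\theta(1-\theta)/\mathrm{dist}(\Omega_1,\Omega_2)$, because $|x-y|\ge\mathrm{dist}(\Omega_1,\Omega_2)$ whenever $x\in\Omega_1$ and $y\in\Omega_2$. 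This is essentially a one-line computation.

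For \eqref{stdue} I would instead exploit the equilibrium measure $\nu$ of the union itself (again Lemma~\ref{l:optm}), set $\bar\theta:=\nu(\Omega_1)$, and let $\nu_1,\nu_2$ be the restrictions of $\nu$ to $\Omega_1,\Omega_2$. Since the sets are disjoint, $\nu=\nu_1+\nu_2$ and $\I_1(\Omega_1\cup\Omega_2)=\int\int|x-y|^{-1}\,d\nu\,d\nu$ splits, exactly as above, into the two diagonal terms plus $2\int_{\Omega_1}\int_{\Omega_2}|x-y|^{-1}\,d\nu_1(x)\,d\nu_2(y)$. The diagonal term over $\Omega_i$ equals $\nu(\Omega_i)^2$ times the energy of the normalized measure $\nu_i/\nu(\Omega_i)$, hence is at least $\bar\theta^2\I_1(\Omega_1)$, respectively $(1-\bar\theta)^2\I_1(\Omega_2)$, by minimality of $\I_1(\Omega_i)$; and the cross term is \emph{strictly positive} provided both $\nu_1$ and $\nu_2$ are nonzero, the kernel being positive and $\mathrm{dist}(\Omega_1,\Omega_2)<\infty$. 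Summing these gives $\I_1(\Omega_1\cup\Omega_2)>\bar\theta^2\I_1(\Omega_1)+(1-\bar\theta)^2\I_1(\Omega_2)$ for this particular $\bar\theta$, so the entire argument reduces to verifying that $\bar\theta=\nu(\Omega_1)\in(0,1)$, i.e.\ that the equilibrium measure of the disjoint union charges each piece.

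This last point is where I expect the real work to lie. Suppose $\nu(\Omega_1)=0$. Then $\nu$ is a probability measure on $\Omega_2$, so on the one hand its energy is $\ge\I_1(\Omega_2)$, and on the other $\I_1(\Omega_1\cup\Omega_2)\le\I_1(\Omega_2)$ (since $\mu_2$ is admissible for the union); hence $\I_1(\Omega_1\cup\Omega_2)=\I_1(\Omega_2)$ and, by the uniqueness in Lemma~\ref{l:optm}, $\nu=\mu_2$. Consequently the Riesz potential $v_2$ of $\mu_2$ equals $\I_1(\Omega_2)$ quasi-everywhere on $\Omega_1\cup\Omega_2$, in particular on a subset of $\Omega_1$ of positive $1$-Riesz capacity (recall $|\Omega_1|>0$, and a set of positive area carries the normalized Lebesgue measure, which has finite $1$-energy). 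But $v_2$ is $1/2$-harmonic, $(-\Delta)^{1/2}v_2=0$, on the open set $\R^2\setminus\Omega_2\supseteq\Omega_1$, is bounded above by $\I_1(\Omega_2)$ on all of $\R^2$, and tends to $0$ at infinity; by the strong maximum principle for $(-\Delta)^{1/2}$ it is then \emph{strictly} below $\I_1(\Omega_2)$ everywhere on $\R^2\setminus\Omega_2$, contradicting the previous sentence. Hence $\nu(\Omega_1)>0$, and by symmetry $\nu(\Omega_2)>0$, so $\bar\theta\in(0,1)$. The nonlocal strong maximum principle invoked here can be quoted from the fractional-Laplacian literature, or proved directly: at a point $x_0\in\R^2\setminus\Omega_2$ where $v_2$ attains its global supremum $M$ one has $0=(-\Delta)^{1/2}v_2(x_0)=c\int_{\R^2}\bigl(M-v_2(y)\bigr)|x_0-y|^{-3}\,dy\ge0$ with equality only if $v_2\equiv M$, which is impossible since $v_2\to0$ at infinity; here $v_2$ is smooth near $x_0$ and bounded on $\R^2$, so the pointwise singular-integral representation is legitimate.
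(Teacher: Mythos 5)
Your proof is correct, and for both inequalities it follows exactly the paper's decomposition: the convex combination $\theta\mu_1+(1-\theta)\mu_2$ as a competitor for \eqref{stuno}, and the restriction of the equilibrium measure of the union for \eqref{stdue}. The only place you genuinely diverge is in showing $\bar\theta=\nu(\Omega_1)\in(0,1)$, which you correctly identify as the one nontrivial point. The paper gets this for free from Lemma~\ref{l:optm}: the equilibrium measure of $\Omega_1\cup\Omega_2\in\mathcal A$ has a strictly positive density on the interior of the union, and each $\Omega_i$ has nonempty interior (if $\mathrm{int}(\Omega_i)=\emptyset$ then $\Omega_i=\partial\Omega_i$ would have finite $\mathcal H^1$ measure and hence zero area, contradicting $|\Omega_i|>0$), so $\nu(\Omega_i)>0$ immediately. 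Your alternative --- assuming $\nu(\Omega_1)=0$, deducing $\nu=\mu_2$ by uniqueness, and then contradicting the fact that the potential of $\mu_2$ attains its global maximum at a point of $\R^2\setminus\Omega_2$ via the strong maximum principle for $(-\Delta)^{1/2}$ --- is sound (the potential is smooth near such a point since $\Omega_1$ is at positive distance from $\Omega_2$, so the pointwise singular-integral evaluation is legitimate), and it has the modest advantage of needing only that $\Omega_1$ has positive $1$-Riesz capacity rather than nonempty interior. The price is that it imports the maximum principle and the q.e.\ identification of the potential on the set, which the paper avoids by packaging the relevant positivity into Lemma~\ref{l:optm} once and for all.
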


\begin{proof}
  Let $\theta \in [0,1]$, and let $\mu_1$ and $\mu_2$ be the
  equilibrium measures for $\I_1(\Omega_1)$ and $\I_1(\Omega_2)$,
  respectively. Then $\tilde\mu = \theta \mu_1 + (1 - \theta) \mu_2$
  is a probability measure supported on $\Omega_1\cup\Omega_2$. Hence
	\begin{align}
          \I_1(\Omega_1 \cup \Omega_2) \leq \int_{\Omega_1 \cup \Omega_2}
          \int_{\Omega_1 \cup \Omega_2} {d 
          \tilde\mu(x) \, d \tilde\mu(y) \over |x - y|} = \theta^2
          \int_{\Omega_1} \int_{\Omega_1} {d 
          \mu_1(x) \, d \mu_1(y) \over |x - y|} \\
          + (1 - \theta)^2 \int_{\Omega_2} \int_{\Omega_2} {d
          \mu_2(x) \, d \mu_2(y) \over |x - y|} + 2 \theta (1 - \theta)
          \int_{\Omega_2} \int_{\Omega_1}  {d
          \mu_1(x) \, d \mu_2(y) \over |x - y|} \notag \\
          \leq  \theta^2 \I_1(\Omega_1) + (1 - \theta)^2 \I_1(\Omega_2) +
          {2 \theta (1 - \theta) \over \mathrm{dist} \, (\Omega_1,
          \Omega_2)}\,, \notag 
	\end{align}
	which gives \eqref{stuno}.
	
	\smallskip
	
	Let now $\mu$ be the equilibrium measure for
	$\I_1(\Omega_1 \cup \Omega_2)$. Observe that by Lemma \ref{l:optm}
	we have $\mu (\Omega_i) > 0$ for $i = 1,2$, in view of the fact that
	the sets $\Omega_i$ have non-empty interiors. Define the
	probability measures
	$\mu_i:=\mu^{-1}(\Omega_i) \mu|_{\Omega_i}$, and let
	$\bar \theta := \mu(\Omega_1) \in (0,1)$. Then it holds
	\begin{align}
          \I_1(\Omega_1 \cup \Omega_2) = \int_{\Omega_1 \cup \Omega_2}
          \int_{\Omega_1 \cup \Omega_2} {d 
          \mu(x) \, d \mu(y) \over |x - y|} = \mu^2(\Omega_1)
          \int_{\Omega_1} \int_{\Omega_1} {d 
          \mu_1(x) \, d \mu_1(y) \over |x - y|} \\
          + \mu^2(\Omega_2) \int_{\Omega_2} \int_{\Omega_2} {d
          \mu_2(x) \, d \mu_2(y) \over |x - y|} + 2 \mu(\Omega_1)\mu(\Omega_2)
          \int_{\Omega_2} \int_{\Omega_1}  {d
          \mu_1(x) \, d \mu_2(y) \over |x - y|} \notag \\
          > \bar\theta^2 \I_1(\Omega_1) + (1 - \bar\theta)^2
          \I_1(\Omega_2) \,, \notag 
	\end{align}
	which gives \eqref{stdue}.
\end{proof}

The first result in Lemma \ref{l:partit} basically says that the
capacitary energy of a union of two admissible sets may be estimated
from above by the self-energies of the components with an arbitrary
partition of charges, plus an interaction energy in terms of the
(positive) distance between these sets. Similarly, the second result
says that the capacitary energy may be estimated from below by the sum
of the capacitary energies for a suitable distribution of charges.

\subsection{Spherical and elliptical domains}

We continue by summarizing some well-known facts about the Riesz
capacity of balls and elliptical sets.

\begin{lemma}
	\label{l:ell}
	Let
	\begin{align}
	\label{eq:Omell}
	\Omega_e = \left\{ (x, y) \ : \ {x^2 \over a^2} + {y^2 \over b^2} \leq
	1 \right\},
	\end{align}
	be an ellipse with eccentricity $e = \sqrt{1 - (b/a)^2}$, where $a$
	and $b$ are the major and minor semi-axes, respectively. Then
	\begin{align}
	\label{eq:PI1ell}
	\mathcal H^1(\partial \Omega_e) = 4 a \mathrm{E} (e^2) \qquad
	\text{and} \qquad  \I_1(\Omega_e) = a^{-1} \mathrm{K} (e^2), 
	\end{align}
	where $\mathrm{K}(m)$ and $\mathrm{E}(m)$ are the complete elliptic
	integrals of the first and second kind, respectively
	\cite{abramowitz}. In particular, when $a = b$ we have
	$\I_1(\Omega_e) = {\pi \over 2 a}$. The equilibrium measure $\mu$
	for $\I_1(\Omega_e)$ is given by
	\begin{align}
	\label{eq:muell}
	d \mu(x, y) = {1 \over 2 \pi \sqrt{a^2 b^2 - b^2 x^2 - a^2 y^2}}
	\, dx \, dy.
	\end{align}
	In addition, we have
	\begin{align}
	\label{eq:dudko}
	\I_1(\Omega_e) \leq \left( { \pi^5 \over 4 |\Omega_e|
		\mathcal H^1(\partial\Omega_e)}  
	\right)^{1/3}.
	\end{align}
\end{lemma}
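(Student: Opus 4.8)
The plan is to handle the four claims in turn. The perimeter formula is a routine arc‑length computation: parametrizing $\partial\Omega_e$ by $t\mapsto(a\cos t,b\sin t)$ gives speed $\sqrt{a^2\sin^2t+b^2\cos^2t}=a\sqrt{1-e^2\cos^2t}$, and integrating over $[0,2\pi]$, using the symmetry of the integrand and the change of variables $t\mapsto\frac\pi2-t$, yields $\mathcal H^1(\partial\Omega_e)=4a\int_0^{\pi/2}\sqrt{1-e^2\sin^2\theta}\,d\theta=4a\,\mathrm E(e^2)$.

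For $\I_1(\Omega_e)$ and the equilibrium measure, the crucial observation is that the points of $\Omega_e$, regarded as the flat plate $\Omega_e\times\{0\}\subset\R^3$, are coplanar, so for every probability measure $\nu$ on $\Omega_e$ the double integral in \eqref{Ialpha} with $\alpha=1$ coincides with the Newtonian energy of $\nu$ in $\R^3$; hence $\I_1(\Omega_e)$ equals the inverse Newtonian capacity of that conducting plate and the minimizing $\nu$ is its equilibrium charge distribution. This distribution is classical. Viewing the plate as the degenerate member $\xi=0$ of the confocal family $\big\{x^2/(a^2+\xi)+y^2/(b^2+\xi)+z^2/\xi=1\big\}_{\xi\ge0}$, the function $\xi\mapsto\int_\xi^\infty\big(s(s+a^2)(s+b^2)\big)^{-1/2}\,ds$ is harmonic off the plate, constant on confocal ellipsoids, and vanishing at infinity; its value on the plate is $\int_0^\infty\big(s(s+a^2)(s+b^2)\big)^{-1/2}\,ds=\tfrac2a\,\mathrm K(e^2)$, as one sees via $s=a^2\tan^2\psi$, and the induced surface charge density, normalized to unit mass, is exactly \eqref{eq:muell}. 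Thus \eqref{eq:muell} is a probability measure on $\Omega_e$ whose Riesz-$1$ potential is constant and equal to $a^{-1}\mathrm K(e^2)$, so by the uniqueness of the equilibrium measure in Lemma \ref{l:optm} it is that measure and $\I_1(\Omega_e)=a^{-1}\mathrm K(e^2)$; the case $a=b$ follows from $\mathrm K(0)=\tfrac\pi2$. (Equivalently, one verifies directly in ellipsoidal coordinates that \eqref{eq:muell} has constant potential and then invokes Lemma \ref{l:optm}.)

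It remains to prove \eqref{eq:dudko}. Substituting $|\Omega_e|=\pi ab$, $\mathcal H^1(\partial\Omega_e)=4a\,\mathrm E(e^2)$, $\I_1(\Omega_e)=a^{-1}\mathrm K(e^2)$ and $b/a=\sqrt{1-e^2}$, and writing $m=e^2$, one checks that \eqref{eq:dudko} is equivalent to the scalar inequality $\Phi(m):=16\,\mathrm E(m)\mathrm K(m)^3(1-m)^{1/2}\le\pi^4$ for $m\in[0,1)$, with equality at $m=0$ (the disk). I would prove $\Phi(m)\le\Phi(0)=\pi^4$ by showing that $\ln\Phi$ is nonincreasing on $(0,1)$. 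Using $\mathrm E'(m)=\frac{\mathrm E-\mathrm K}{2m}$ and $\mathrm K'(m)=\frac{\mathrm E-(1-m)\mathrm K}{2m(1-m)}$ one gets
\[
\frac{d}{dm}\ln\Phi(m)=\frac{3\mathrm E^2-(2-m)\mathrm E\mathrm K-(1-m)\mathrm K^2}{2m(1-m)\,\mathrm E\mathrm K},
\]
so it suffices to show $N(m):=3\mathrm E^2-(2-m)\mathrm E\mathrm K-(1-m)\mathrm K^2\le0$. Setting $t:=\mathrm E(m)/\mathrm K(m)\in(0,1]$, the quantity $N/\mathrm K^2=3t^2-(2-m)t-(1-m)$ is a quadratic in $t$ with a negative root and a positive root $q(m)=\tfrac16\big((2-m)+\sqrt{m^2-16m+16}\big)$, $q(0)=1$; hence $N\le0\iff t\le q$. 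Now $t$ satisfies the Riccati equation $t'=-\tfrac12-\frac{(t-1+m)^2}{2m(1-m)}$ with $t(m)\to1$ as $m\to0^+$, while $q$ solves $q'=-\frac{q+1}{\sqrt{m^2-16m+16}}$; the plan is to verify that $q$ is a supersolution and to conclude $t\le q$ by the comparison principle, which applies because the right‑hand side of the Riccati equation is decreasing in $t$ on the relevant range $t>1-m$ (the latter holding since $\mathrm K'>0$).

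The hard part is this last step: the tightness of \eqref{eq:dudko} at the disk forces $t$ and $q$ to agree to high order at $m=0$ (indeed $t'(0)=q'(0)=-\tfrac12$), so no crude estimate on $\mathrm E/\mathrm K$ will do, and the supersolution inequality $q'(m)+\tfrac12+\frac{(q(m)-1+m)^2}{2m(1-m)}\ge0$ has to be checked by an explicit, if lengthy, algebraic manipulation (using $3q^2-(2-m)q-(1-m)=0$ and $\sqrt{m^2-16m+16}=6q-2+m$), with additional care taken in running the comparison across the singular endpoint $m=0$. Alternatively, the needed bound $\mathrm E(m)/\mathrm K(m)\le q(m)$ could be quoted from the literature on sharp inequalities between complete elliptic integrals.
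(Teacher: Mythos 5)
Your treatment of the perimeter, of $\I_1(\Omega_e)$, and of the equilibrium measure is essentially the paper's: the authors likewise obtain \eqref{eq:PI1ell} and \eqref{eq:muell} by degenerating the classical ellipsoid formulas (explicit solution of Laplace's equation in ellipsoidal coordinates, then passing to the flat limit), so that part is fine. Your reduction of \eqref{eq:dudko} is also exactly theirs: the inequality is equivalent to $16\sqrt{1-m}\,\mathrm{K}^3(m)\mathrm{E}(m)\le\pi^4$, and differentiation reduces everything to the negativity of $N(m)=3\mathrm{E}^2-(2-m)\mathrm{E}\mathrm{K}-(1-m)\mathrm{K}^2$, which is the paper's function $g$ multiplied by $\mathrm{E}$.

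The divergence, and the gap, is in how you prove $N\le 0$. The paper does this by bounding $\mathrm{K}$ from below and $\mathrm{E}$ from above by their 15th-order Taylor polynomials and then checking sign conditions on explicit polynomials of degrees 27 and 15 with computer algebra --- a sign that this step is where all the difficulty sits. The reason is an extreme degeneracy at $m=0$: one computes $\mathrm{E}/\mathrm{K}=1-\tfrac m2-\tfrac{m^2}{16}-\tfrac{m^3}{32}-\tfrac{41}{2048}m^4+O(m^5)$ while your comparison root expands as $q(m)=1-\tfrac m2-\tfrac{m^2}{16}-\tfrac{m^3}{32}-\tfrac{19}{1024}m^4+O(m^5)$, so $q-t$ vanishes to fourth order (this is the $O(e^8)$ in \eqref{eq:I1OmP}). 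Your scheme is correctly set up --- the Riccati equation for $t=\mathrm{E}/\mathrm{K}$ and the formula for $q'$ both check out, and writing $w=q-t$ as the solution of $w'=h(m)-\tfrac{q+t-2+2m}{2m(1-m)}\,w$, $w(0)=0$, with bounded coefficient, removes the worry about the singular endpoint --- but everything hinges on the unproved supersolution inequality $h(m):=q'+\tfrac12+\tfrac{(q-1+m)^2}{2m(1-m)}\ge0$, which you explicitly leave open. That is not a cosmetic omission: it is the entire content of \eqref{eq:dudko}, and because of the fourth-order tangency it holds only barely near the origin ($h(m)=\tfrac{3}{512}m^3+O(m^4)$), so it cannot be waved through. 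On the positive side, unlike the paper's $g\le 0$, your $h\ge0$ is a purely algebraic statement: with $S=\sqrt{m^2-16m+16}$ one has $h=\tfrac{m-8+2S}{6S}+\tfrac{(S+5m-4)^2}{72m(1-m)}$, and after rationalizing this becomes a low-degree polynomial inequality that should be verifiable by hand; completing that computation would make your route a genuine improvement over the computer-assisted proof. The fallback of ``quoting $\mathrm{E}/\mathrm{K}\le q$ from the literature'' is not available as stated, since that bound is precisely equivalent to the claim being proved and no reference is given.
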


\begin{proof}
	Most of the formulas in this lemma are well-known. For the reader's
	convenience, we present some details regarding the Riesz capacitary
	energy of an elliptical set. The value of $\I_1(\Omega_e)$ and
	the expression for the equilibrium measure $\mu$ follow by passing
	to a suitable limit from the corresponding expressions for the
	ellipsoid, which can be found by an explicit solution of the
	Laplace's equation in three dimensions in ellipsoidal coordinates
	\cite{landau8}.
	
	The formula in \eqref{eq:dudko} is motivated by \cite{dudko04},
	where it was shown that
	\begin{align}
	\label{eq:I1OmP}
	\left( { 4 |\Omega_e| \mathcal H^1(\partial \Omega_e)
		\I_1^3(\Omega_e) \over \pi^5} 
	\right)^{1/3} = 1 + O \left( e^8 \right),
	\end{align}
	and the inequality can be seen numerically for any particular value
	of $e \in (0,1)$. To obtain the upper bound on the whole interval is
	not straightforward, however, since many cancellations give rise to
	\eqref{eq:I1OmP}.
	
	The proof of \eqref{eq:dudko} amounts to showing that the function
	\begin{align}
	f(x) := {16 \sqrt{1 - x} \, \mathrm{K}^3(x) \mathrm{E}(x) \over
		\pi^4}, 
	\end{align}
	is bounded above by 1 for all $x \in (0,1)$. For this, we
	differentiate $f$ to obtain, using the identities associated with
	the complete elliptic integrals \cite{abramowitz}:
	\begin{align}
	f'(x) = {8 \mathrm{K}^2(x) (3 \mathrm{E}^2(x) - (2 - x)
		\mathrm{E}(x) \mathrm{K}(x) - (1 - x) \mathrm{K}^2(x)) \over \pi^4 
		x \sqrt{1 - x}}.
	\end{align}
	Therefore, our proof is concluded, once we prove that the function
	\begin{align}
	\label{eq:g}
	g(x) := 3 \mathrm{E}(x) - (2 - x)
	\mathrm{K}(x) - (1 - x) {\mathrm{K}^2(x) \over \mathrm{E}(x)},
	\end{align}
	is negative for all $x \in (0,1)$. This is done by using the
	estimates
	\begin{align}
	\label{eq:KE15}
	\mathrm{K}(x) \geq \mathrm{K}_{15}(x), \qquad \mathrm{E}(x) \leq
	\mathrm{E}_{15}(x), 
	\end{align}
	where $\mathrm{K}_{15}(x)$ and $\mathrm{E}_{15}(x)$ are the
	15th-order Taylor polynomials for $\mathrm{K}(x)$ and
	$\mathrm{E}(x)$, respectively, obtained by truncating the power
	series representations of $\mathrm{K}(x)$ and
	$\mathrm{E}(x)$. Substituting \eqref{eq:KE15} into \eqref{eq:g}, we
	find that $g(x) \leq x^4 P_{27}(x) / Q_{15}(x)$, where $P_{27}(x)$
	and $Q_{15}(x)$ are two explicit polynomials of degrees 27 and 15,
	respectively, with $P_{27}(0) > 0$ and $Q_{15}(0) < 0$.  The proof
	is concluded by verifying that both $P_{27}$ and $Q_{15}$ do not
	change sign in $[0,1]$. All these tedious computations were carried
	out using computer algebra software Mathematica 10.4.
\end{proof}

We now observe that since for a given $\Omega \in \mathcal A_m$ it is
always possible to find $e \in [0, 1)$ such that
$\Omega_e\in \mathcal A_m$ and
$\mathcal H^1(\partial \Omega_e) =\mathcal H^1(\partial \Omega)$, by
comparing the energy of $\Omega$ with that of $\Omega_e$ we obtain the
following result from \eqref{eq:dudko}.

\begin{corollary}
	\label{c:dudko}
	Let $\lambda > 0$, $m > 0$ and let $\Omega \in \mathcal A_m$ be a
	minimizer of $\E^Q_\lambda$ over $\mathcal A_m$. Then
	\begin{align}
	\label{eq:EOmOme}
	\E^Q_\lambda(\Omega) \leq \mathcal H^1(\partial \Omega) +
	\lambda \left( { \pi^5 \over  4 m \mathcal H^1(\partial \Omega)}
	\right)^{1/3}.  
	\end{align}
\end{corollary}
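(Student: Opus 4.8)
The plan is to use the minimality of $\Omega$ against an explicit elliptical competitor of the same area and the same perimeter, whose capacitary energy is controlled by \eqref{eq:dudko}.

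First I would record that $\mathcal H^1(\partial\Omega)\ge 2\sqrt{\pi m}$. Indeed, $\Omega$ has finite perimeter with $|\Omega|=m$, so the planar isoperimetric inequality gives $P(\Omega)\ge 2\sqrt{\pi m}$, and combining with the inequality $P(\Omega)\le\mathcal H^1(\partial\Omega)$ recorded in Section~\ref{sec:main-results} yields the claim.

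Next I would produce the elliptical competitor. Consider the ellipses $\Omega_e$ from Lemma~\ref{l:ell} with semi-axes $a\ge b>0$ subject to the area constraint $\pi a b=m$, parametrized by the eccentricity $e=\sqrt{1-(b/a)^2}\in[0,1)$; each such $\Omega_e$ lies in $\mathcal A_m$. Solving the constraint gives $a=\big(m/(\pi\sqrt{1-e^2})\big)^{1/2}$, so by \eqref{eq:PI1ell} the perimeter $\mathcal H^1(\partial\Omega_e)=4a\,\mathrm E(e^2)$ is a continuous function of $e$ on $[0,1)$ which equals $2\sqrt{\pi m}$ at $e=0$ and diverges to $+\infty$ as $e\to1^-$, since then $a\to\infty$ while $\mathrm E(e^2)\to\mathrm E(1)=1$. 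By the intermediate value theorem, since $\mathcal H^1(\partial\Omega)\in[2\sqrt{\pi m},\infty)$ by the previous step, there exists $e\in[0,1)$ with $\mathcal H^1(\partial\Omega_e)=\mathcal H^1(\partial\Omega)$.

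Finally, since $\Omega$ minimizes $\E^Q_\lambda$ over $\mathcal A_m$ and $\Omega_e\in\mathcal A_m$, we have $\E^Q_\lambda(\Omega)\le\E^Q_\lambda(\Omega_e)=\mathcal H^1(\partial\Omega_e)+\lambda\,\I_1(\Omega_e)$. Using $\mathcal H^1(\partial\Omega_e)=\mathcal H^1(\partial\Omega)$, $|\Omega_e|=m$, and the bound \eqref{eq:dudko} for $\I_1(\Omega_e)$, this gives exactly \eqref{eq:EOmOme}. The only point requiring (mild) care is the surjectivity of $e\mapsto\mathcal H^1(\partial\Omega_e)$ onto $[2\sqrt{\pi m},\infty)$, i.e.\ its continuity together with its blow-up as $e\to1^-$; everything else is immediate from the minimality of $\Omega$ and from \eqref{eq:dudko}, the latter being the substantive input already supplied by Lemma~\ref{l:ell}.
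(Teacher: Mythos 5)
Your proof is correct and follows essentially the same route as the paper, which simply observes that one can always find an ellipse $\Omega_e\in\mathcal A_m$ with $\mathcal H^1(\partial\Omega_e)=\mathcal H^1(\partial\Omega)$ and then compares energies using \eqref{eq:dudko}. Your only addition is to justify the existence of that ellipse via the isoperimetric lower bound $\mathcal H^1(\partial\Omega)\ge P(\Omega)\ge 2\sqrt{\pi m}$ and the intermediate value theorem, which is a legitimate (and welcome) filling-in of a detail the paper leaves implicit.
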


Note that numerically the right-hand side of \eqref{eq:dudko} gives a
remarkably accurate value of $\I_1(\Omega_e)$ for practically all
values of $e$ \cite{dudko04}. For example, when $e = 0.7$, it predicts
the value of $\I_1(\Omega_e)$ with the relative error of
$5 \times 10^{-5}$. Even at $e=0.9999$ the relative error of the
leading order expression is within about 25\%. Let us also point
out that the leading order term in \eqref{eq:dudko} was found to be a
very good approximation for the values of $\I_1$ for sets of varied
simple shapes, not limited to elliptical sets \cite{dudko04}. In fact,
if \eqref{eq:EOmOme} were an equality, one would immediately conclude
from the isoperimetric inequality that balls are the only minimizers
of $\E^Q_\lambda$ over $\mathcal A_m$ for all
$\lambda \leq \lambda_{c2}^Q$, where $\lambda_{c2}^Q$ is given by
\eqref{eq:lamc2}.

\medskip

Recalling Lemma \ref{l:partit}, the computations of Lemma \ref{l:ell}
allow us to make conclusions about instabilities of balls.

\begin{lemma}
  \label{l:lamc1c2}
  Let $m > 0$, let $R = \sqrt{m / \pi}$ and let $\lambda_{c1}^Q$ and
  $\lambda_{c2}^Q$ be as in \eqref{eq:lamc1} and
  \eqref{eq:lamc2}. Then
  \begin{enumerate}[(i)]
  \item If $\lambda > \lambda_{c1}^Q$, a ball with area $m$ is
    unstable with respect to splitting, i.e.,
    $\E^Q_\lambda(\overline B_R) > \E^Q_\lambda(\overline
    B_{R/\sqrt{2}} (0) \cup \overline B_{R/\sqrt{2}} (x_0)$
    for all $x_0 \in \R^2$ with $|x_0| \gg R$.
  \item If $\lambda > \lambda_{c1}^Q$, a ball with area $m$ is
    unstable with respect to elongations, i.e., there exists an
    elliptical set $\Omega_e$ such that
    $\E^Q_\lambda(\overline B_R) > \E^Q_\lambda(\Omega_e)$.
  \end{enumerate}
\end{lemma}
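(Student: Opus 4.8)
The plan is to obtain both items by comparing $\E^Q_\lambda(\overline B_R)$ with the energy of an explicit competitor, using the formulas of Lemma \ref{l:ell}. Throughout I use that, by \eqref{eq:EBR} (equivalently, $\I_1(\overline B_R)=\frac{\pi}{2R}$ and $\mathcal H^1(\partial \overline B_R)=2\pi R$),
\begin{align*}
\E^Q_\lambda(\overline B_R)=2\pi R+\frac{\lambda\pi}{2R},\qquad R=\sqrt{m/\pi}.
\end{align*}

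For item (i) I would take the competitor $\Omega_1\cup\Omega_2$ with $\Omega_1=\overline B_{R/\sqrt2}(0)$ and $\Omega_2=\overline B_{R/\sqrt2}(x_0)$; both balls have area $m/2$, and for $|x_0|\gg R$ they are at positive distance $\mathrm{dist}(\Omega_1,\Omega_2)=|x_0|-R\sqrt2$, so the perimeter is additive and equals $2\cdot 2\pi R/\sqrt2=2\sqrt2\,\pi R$. For the capacitary term I apply the upper bound \eqref{stuno} of Lemma \ref{l:partit} with $\theta=\frac12$, together with $\I_1(\overline B_{R/\sqrt2})=\frac{\pi}{\sqrt2\,R}$ from Lemma \ref{l:ell}, to get
\begin{align*}
\I_1(\Omega_1\cup\Omega_2)\le \frac12\,\I_1(\overline B_{R/\sqrt2})+\frac{1}{2\,\mathrm{dist}(\Omega_1,\Omega_2)}=\frac{\pi}{2\sqrt2\,R}+\frac{1}{2(|x_0|-R\sqrt2)}.
\end{align*}
Letting $|x_0|\to\infty$ the interaction term vanishes, and one checks that $2\pi R+\frac{\lambda\pi}{2R}>2\sqrt2\,\pi R+\frac{\lambda\pi}{2\sqrt2\,R}$ exactly when $\lambda>4\sqrt2\,R^2=\frac{4\sqrt2\,m}{\pi}=\lambda_{c1}^Q$. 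Hence for $\lambda>\lambda_{c1}^Q$ the strict inequality $\E^Q_\lambda(\overline B_R)>\E^Q_\lambda(\Omega_1\cup\Omega_2)$ holds once $|x_0|$ is large enough, which is item (i).

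For item (ii) — where the relevant threshold is $\lambda_{c2}^Q$ from \eqref{eq:lamc2}, consistently with the surrounding discussion — I would take $\Omega_e$ to be the ellipse of area $m$, i.e. with $\pi ab=m$, equivalently $a=R(1-e^2)^{-1/4}$, and small eccentricity $e$. Expanding $\mathcal H^1(\partial\Omega_e)=4a\,\mathrm{E}(e^2)$ and $\I_1(\Omega_e)=a^{-1}\mathrm{K}(e^2)$ (Lemma \ref{l:ell}) via the power series $\mathrm{K}(m)=\frac{\pi}{2}(1+\frac m4+\frac{9m^2}{64}+\cdots)$ and $\mathrm{E}(m)=\frac{\pi}{2}(1-\frac m4-\frac{3m^2}{64}-\cdots)$ with $m=e^2$, and using $a^{-1}=R^{-1}(1-e^2)^{1/4}$, the $O(e^2)$ terms cancel (the disk is a critical point of both functionals under the area constraint) and one is left with
\begin{align*}
\mathcal H^1(\partial\Omega_e)=2\pi R\left(1+\frac{3e^4}{64}\right)+O(e^6),\qquad \I_1(\Omega_e)=\frac{\pi}{2R}\left(1-\frac{e^4}{64}\right)+O(e^6),
\end{align*}
so that $\E^Q_\lambda(\Omega_e)-\E^Q_\lambda(\overline B_R)=\frac{e^4}{64}(6\pi R-\frac{\lambda\pi}{2R})+O(e^6)$. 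For $\lambda>12R^2=\frac{12m}{\pi}=\lambda_{c2}^Q$ the bracket is negative, so for $e>0$ small the ellipse $\Omega_e$ has strictly smaller energy than $\overline B_R$, which is item (ii).

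All steps in item (i) are elementary. The only delicate point is the expansion in item (ii): since the leading $O(e^2)$ contributions to both the perimeter and the capacitary energy vanish, one must push the expansions of $\mathrm{K}$ and $\mathrm{E}$ to fourth order and correctly implement the area constraint through $a=R(1-e^2)^{-1/4}$ before collecting coefficients — this is precisely where the threshold $\lambda_{c2}^Q=12m/\pi$ appears.
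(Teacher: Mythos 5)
Your proposal is correct, and part (i) is exactly the ``straightforward computation'' via Lemma \ref{l:ell} and Lemma \ref{l:partit} that the paper invokes: equal charge splitting $\theta=\tfrac12$ in \eqref{stuno}, the interaction term $\tfrac{1}{2(|x_0|-\sqrt2 R)}$ vanishing as $|x_0|\to\infty$, and the comparison $2\pi R+\tfrac{\lambda\pi}{2R}$ versus $2\sqrt2\,\pi R+\tfrac{\lambda\pi}{2\sqrt2 R}$ yielding the threshold $4\sqrt2\,m/\pi=\lambda_{c1}^Q$ all check out. You are also right to read the threshold in item (ii) as $\lambda_{c2}^Q$; the $\lambda_{c1}^Q$ appearing there is a typo, as the surrounding discussion (and the value $12m/\pi$) makes clear.

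For part (ii) you take a genuinely different route from the paper. The paper uses Corollary \ref{c:dudko}: for an ellipse of area $m$ and perimeter $P$, inequality \eqref{eq:dudko} gives $\E^Q_\lambda(\Omega_e)\le f(P):=P+\lambda\bigl(\pi^5/(4mP)\bigr)^{1/3}$, with equality $f(2\pi R)=\E^Q_\lambda(\overline B_R)$ at $e=0$; since $f'(2\pi R)=1-\tfrac{\lambda\pi}{12m}<0$ precisely when $\lambda>\lambda_{c2}^Q$, a slightly elongated ellipse beats the ball. This leans on the (computer-assisted) global bound \eqref{eq:dudko} but then requires only a one-line differentiation. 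You instead expand $4a\,\mathrm{E}(e^2)$ and $a^{-1}\mathrm{K}(e^2)$ directly to order $e^4$ under the area constraint $a=R(1-e^2)^{-1/4}$; your coefficients $1+\tfrac{3e^4}{64}$ and $1-\tfrac{e^4}{64}$ are correct, the $O(e^2)$ cancellation is exactly as you say, and the sign of $6\pi R-\tfrac{\lambda\pi}{2R}$ reproduces $\lambda_{c2}^Q=12m/\pi$. Your argument is self-contained and avoids \eqref{eq:dudko} entirely, at the price of a fourth-order expansion of the elliptic integrals; the paper's argument reuses machinery it needs anyway for Corollary \ref{c:dudko}. Both are valid, and it is worth noting that the agreement of the two thresholds is no accident: \eqref{eq:dudko} is an equality up to $O(e^8)$ by \eqref{eq:I1OmP}, so differentiating its right-hand side at $P=2\pi R$ captures the same fourth-order coefficient you computed by hand.
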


\begin{proof}
  The first assertion follows by a straightforward computation, using
  Lemma \ref{l:ell} and Lemma \ref{l:partit}. For the second, we use
  Corollary \ref{c:dudko}, minimizing the right-hand side of
  \eqref{eq:EOmOme} with respect to
  $\mathcal H^1(\partial \Omega) \geq 2 \pi R$.
\end{proof}

\medskip

We now show that a single ball has lower energy than a disjoint
union of balls with the same total perimeter.

\begin{lemma}\label{lemball}
	Let $\Omega=\bigcup_{i=1}^{N} \overline{B}_{r_i}(x_i)$ be a union of
	a finite number of disjoint {closed} balls, and let
	$r : = \sum_{i=1}^N r_i$.  Then
	\begin{align}
	\label{central}
	\E_\lambda^Q(\overline B_r)\le \E_\lambda^Q(\Omega)\,,
	\end{align}
	with equality if and only if $\Omega=\overline B_r(x)$ for some
	$x\in\R^2$.
\end{lemma}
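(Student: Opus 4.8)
The plan is to reduce \eqref{central} to a purely capacitary statement. Since the closed balls $\overline B_{r_i}(x_i)$ are pairwise disjoint, so are their bounding circles, whence $\mathcal H^1(\partial\Omega) = \sum_{i=1}^N 2\pi r_i = 2\pi r = \mathcal H^1(\partial\overline B_r)$; the perimeter term is therefore unaffected by the splitting. Consequently \eqref{central} is equivalent to the inequality $\I_1(\overline B_r) \le \I_1(\Omega)$, with equality exactly when $\Omega$ is a single ball. Introducing the capacity $\mathrm{Cap}(\cdot) := \I_1(\cdot)^{-1}$ and recalling from Lemma \ref{l:ell} that $\mathrm{Cap}(\overline B_\rho) = 2\rho/\pi$ is linear in $\rho$, it suffices to prove the strict subadditivity $\mathrm{Cap}(\Omega) < \sum_{i=1}^N \mathrm{Cap}(\overline B_{r_i}) = \tfrac{2}{\pi}\sum_i r_i = \mathrm{Cap}(\overline B_r)$ whenever $N \ge 2$.

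I would establish this by induction on $N$, the case $N=1$ being an identity. For $N\ge 2$, write $\Omega = \Omega_1 \cup \Omega_2$ with $\Omega_1 := \overline B_{r_1}(x_1)$ and $\Omega_2 := \bigcup_{i=2}^N \overline B_{r_i}(x_i)$; both sets lie in $\mathcal A$ and are disjoint. Lemma \ref{l:partit}, inequality \eqref{stdue}, produces a $\bar\theta\in(0,1)$ with $\I_1(\Omega) > \bar\theta^2\I_1(\Omega_1) + (1-\bar\theta)^2\I_1(\Omega_2)$. Bounding the right-hand side below by the minimum over $\theta\in[0,1]$ of the quadratic $\theta\mapsto\theta^2 a + (1-\theta)^2 b$, which an elementary computation gives as $ab/(a+b)$ at $\theta = b/(a+b)$, with $a := \I_1(\Omega_1)$ and $b := \I_1(\Omega_2)$, yields $\I_1(\Omega) > \I_1(\Omega_1)\I_1(\Omega_2)/(\I_1(\Omega_1)+\I_1(\Omega_2))$, i.e. $\mathrm{Cap}(\Omega) < \mathrm{Cap}(\Omega_1) + \mathrm{Cap}(\Omega_2)$. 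By Lemma \ref{l:ell} we have $\mathrm{Cap}(\Omega_1) = 2r_1/\pi$, and by the inductive hypothesis applied to the $(N-1)$-ball set $\Omega_2$ we have $\mathrm{Cap}(\Omega_2) \le \tfrac{2}{\pi}\sum_{i=2}^N r_i$. Adding these bounds gives $\mathrm{Cap}(\Omega) < \tfrac{2}{\pi}\sum_{i=1}^N r_i = \mathrm{Cap}(\overline B_r)$, closing the induction.

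Unwinding the definitions, $\I_1(\Omega) \ge \I_1(\overline B_r)$ with equality if and only if $N=1$; combining with the identity $\mathcal H^1(\partial\Omega) = \mathcal H^1(\partial\overline B_r)$ gives $\E^Q_\lambda(\overline B_r) \le \E^Q_\lambda(\Omega)$, with equality if and only if $N=1$, that is $\Omega = \overline B_r(x_1)$ for some $x_1\in\R^2$. I do not anticipate a real obstacle: the only substantive input is the strict lower bound \eqref{stdue} of Lemma \ref{l:partit}, whose strictness reflects the strict positivity and finiteness of the mutual Riesz interaction energy of two disjoint compact sets; the perimeter invariance, the optimization of a quadratic in $\theta$, and the induction on $N$ are routine bookkeeping. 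The single point requiring attention is that the equality case propagate correctly, but this is automatic, since strict inequality is already generated at the very first splitting step as soon as $N\ge 2$.
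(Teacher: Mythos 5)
Your proof is correct. The two substantive inputs are the same as in the paper's argument --- the strict lower bound \eqref{stdue} of Lemma \ref{l:partit} and the explicit value $\I_1(\overline B_\rho)=\pi/(2\rho)$ --- and your two-ball computation (minimizing $\theta\mapsto\theta^2 a+(1-\theta)^2b$ at $\theta=b/(a+b)$ to get the harmonic sum $ab/(a+b)$) is exactly the paper's inequality \eqref{r1r2} evaluated at $\hat\theta=r_1/(r_1+r_2)$, read in reverse. Where you genuinely diverge is the induction step for $N\ge 3$: the paper merges two balls into one and must then re-insert the merged ball into the remaining configuration, which forces it to place that ball at a large distance from $\Omega''$ and invoke the upper bound \eqref{stuno} to control the new interaction term against the gain $\delta$ from \eqref{stdue}. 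By phrasing everything as strict subadditivity of the capacity $C_1=\I_1^{-1}$ under disjoint union --- $C_1(\Omega_1\cup\Omega_2)<C_1(\Omega_1)+C_1(\Omega_2)$ --- together with the linearity of $C_1(\overline B_\rho)=2\rho/\pi$ in the radius, you reduce the induction to adding scalars and never need to construct or relocate intermediate configurations. This is a cleaner bookkeeping of the same estimate; what it buys is the elimination of the ``move the merged ball far away'' step and of the quantitative constant $\delta$, at no cost in generality. The only points that needed checking --- that disjoint closed balls are at positive mutual distance so that $\mathcal H^1(\partial\Omega)=2\pi r$ exactly, that $\Omega_1,\Omega_2\in\mathcal A$ so \eqref{stdue} applies, and that strictness is generated once at the first split and then propagates --- are all handled correctly.
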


\begin{proof}
	If $\Omega$ is a ball then there is nothing to prove. 
	
	\smallskip
	Assume that
	$\Omega = \overline B_{r_1}(x_1)\cup \overline B_{r_2}(x_2)$ is the
	union of two disjoint balls.  We show that the ball $\overline B_r$,
	with $r_1+r_2=r$, has always {less energy}.  Indeed, for every
	$\theta \in [0,1]$ it holds
	\begin{align}
	\label{r1r2}
	\frac{1}{r_1+r_2}\le \frac{\theta^2}{r_1}+\frac{(1 - \theta)^2}{r_2},  
	\end{align}
	which is readily verified by noticing that
	$\hat\theta = r_1 / (r_1 + r_2)$ minimizes the right-hand side of
	\eqref{r1r2}, and equality holds for $\theta= \hat\theta$.  Recalling
	that $\I_1(\overline B_{r})=\pi/(2r)$, inequality \eqref{r1r2} can be
	rephrased as
	\begin{equation}\label{eqballs}
	\I_1(\overline B_{r})
	\le \theta^2\I_1(\overline B_{r_1}(x_1))+(1 - \theta)^2\I_1(\overline
	B_{r_2}(x_2))\qquad \forall \theta \in [0,1].
	\end{equation}
	On the other hand, since
	$\mathcal H^1(\partial B_{r})= \mathcal H^1(\partial B_{r_1}(x_1))+
	\mathcal H^1(\partial B_{r_2}(x_2))$,
	from \eqref{eqballs} and \eqref{stdue} it follows that
	\begin{align}\label{sttre}
	\E_\lambda^Q (\overline B_{r}) \leq  \E_{\lambda \bar\theta^2} ^Q 
	(\overline B_{r_1}(x_1)) + \E_{\lambda (1 - \bar\theta)^2} ^Q
	(\overline B_{r_2}(x_2))  <
	\E_{\lambda} ^Q (\overline B_{r_1}(x_1) \cup \overline B_{r_2}(x_2)). 
	\end{align}
	
	\smallskip
	Assume now that $\Omega$ consists of {more than two disjoint}
	balls. In this case we write $\Omega = \Omega' \cup \Omega''$, where
	$\Omega':=\overline B_{r_1}(x_1) \cup \overline B_{r_2}(x_2)$ {and
		$\Omega'':=\Omega\setminus\Omega' \not=\emptyset$. Again, from
		Lemma \ref{l:partit}} we know that there exists
	$\bar \theta\in (0,1)$ and $\delta>0$ such that
	\begin{align}
	\I_1(\Omega)\ge\bar \theta^2\I_1(\Omega')+(1-\bar
	\theta)^2\I_1(\Omega'')+\delta. 
	\end{align}
	Moreover, from the previous discussion it follows that
	$\I_1(\Omega') \ge \I_1(\overline B_{\bar r}(x))$, where
	$\bar r := r_1 + r_2$, for any $x\in\R^2$, so that we get
	\begin{align}
	\I_1(\Omega)\ge \bar \theta^2\I_1(\overline B_{\bar r} (x))+(1-\bar
	\theta)^2\I_1(\Omega'')+\delta. 
	\end{align}
	Recalling \eqref{stuno}, we then obtain the estimate
	\begin{align}
          \I_1(\Omega)
          &\ge \bar \theta^2\I_1(\overline B_{\bar r} (x))+(1-\bar 
            \theta)^2\I_1(\Omega'')+\delta \\
          &\ge \I_1(\overline B_{\bar r} (x)\cup \Omega'')+\delta
            -\frac{2\bar\theta(1-\bar\theta)}{{\rm dist}(\overline
            B_{\bar r} (x),\Omega'')}\,. \notag 
	\end{align}
	If we choose $x \in \R^2$ such that
	${\rm dist}(\overline B_{\bar r}(x),\Omega'')>(2\delta)^{-1}$, we then
	get $\I_1(\Omega)>\I_1(\overline B_{\bar r} (x) \cup \Omega'')$.
	Recalling that
	$\mathcal H^1(\partial\Omega)=\mathcal H^1(\partial B_{\bar r}
	(x)\cup \partial\Omega'')$, this implies that
	\begin{align}
	\E_{\lambda} ^Q(\Omega)> \E_{\lambda} ^Q(\overline B_{\bar r} (x)\cup
	\Omega'')\,.   
	\end{align}
	The proof {is then} easily concluded by means of an inductive
	argument.
\end{proof}

Our next lemma is a version of a well-known Brunn-Minkowski inequality
for the Newtonian capacity of convex bodies in $\R^3$
\cite{borell,ColSal,caffarelli}.

\begin{lemma}\label{lemPisa}
	For any pair of convex bodies $\Omega_1,\,\Omega_2$ in $\mathbb R^2$
	and for any $\lambda\in[0,1]$, it holds
	\begin{align}
	C_1(\lambda\Omega_1+(1-\lambda)\Omega_2)\ge\lambda C_1
	(\Omega_1)+(1-\lambda)C_1(\Omega_2),    
	\end{align}
	where $C_1(\Omega):=\I_1^{-1}(\Omega)$ is the 1-Riesz capacity of
	$\Omega$. Moreover the equality holds if and only if $\Omega_1$ is
	homothetic to $\Omega_2$.
\end{lemma}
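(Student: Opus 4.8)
The plan is to reduce the statement to the classical Brunn--Minkowski inequality for the Newtonian capacity of convex bodies in $\R^3$. First I would observe that for a planar convex body $\Omega$, setting $\widehat\Omega := \Omega\times\{0\}\subset\R^3$, the Riesz kernel $|x-y|^{-1}$ on $\R^2$ is exactly the restriction to the plane $\{x_3=0\}$ of the Newtonian kernel on $\R^3$; hence probability measures on $\Omega$ correspond bijectively to probability measures on $\widehat\Omega$ with identical energy, so $\I_1(\Omega)$ equals, up to the universal normalization constant, the Newtonian energy of $\widehat\Omega$, and therefore $C_1(\Omega)=c\,\mathrm{Cap}_{\R^3}(\widehat\Omega)$ for a fixed $c>0$, where $\mathrm{Cap}_{\R^3}$ denotes the Newtonian capacity. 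Since the asserted inequality is unchanged if $C_1$ is multiplied by $c$, and since the hat operation commutes with Minkowski combinations, i.e. $\widehat{\lambda\Omega_1+(1-\lambda)\Omega_2}=\lambda\widehat\Omega_1+(1-\lambda)\widehat\Omega_2$, it will suffice to establish the Brunn--Minkowski inequality for $\mathrm{Cap}_{\R^3}$ along the Minkowski segment from $\widehat\Omega_1$ to $\widehat\Omega_2$.

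For the inequality I would approximate the flat sets by genuine convex bodies: for $\eps>0$ put $K_i^\eps:=\Omega_i\times[-\eps,\eps]\subset\R^3$, so that $\lambda K_1^\eps+(1-\lambda)K_2^\eps=(\lambda\Omega_1+(1-\lambda)\Omega_2)\times[-\eps,\eps]$. The classical Brunn--Minkowski inequality for Newtonian capacity \cite{borell,ColSal,caffarelli}, whose homogeneity exponent $\tfrac{1}{n-2}$ equals $1$ when $n=3$, asserts that $\mathrm{Cap}_{\R^3}$ is concave with respect to Minkowski addition, so
\begin{align}
\mathrm{Cap}_{\R^3}\big(\lambda K_1^\eps+(1-\lambda)K_2^\eps\big)\ge\lambda\,\mathrm{Cap}_{\R^3}(K_1^\eps)+(1-\lambda)\,\mathrm{Cap}_{\R^3}(K_2^\eps).
\end{align}
Letting $\eps\downarrow0$, the sets $K_i^\eps$ and their Minkowski combination decrease to $\widehat\Omega_1$, $\widehat\Omega_2$ and $\lambda\widehat\Omega_1+(1-\lambda)\widehat\Omega_2$ respectively, so continuity of the Newtonian capacity along decreasing sequences of compact sets \cite{landkof} lets me pass to the limit and conclude the inequality.

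For the equality statement, the easy direction (homothetic implies equality) is immediate from the $1$-homogeneity and translation invariance of $C_1$. For the converse I would first note that $t\mapsto C_1(t\Omega_1+(1-t)\Omega_2)$ is concave on $[0,1]$ by the inequality just proved, so if equality holds at one interior value of $t$ this function coincides with its chord there and is hence affine on all of $[0,1]$; I could then invoke the characterization of equality in the Brunn--Minkowski inequality for Newtonian capacity \cite{caffarelli} (see also \cite{ColSal}), which forces $\widehat\Omega_1$ and $\widehat\Omega_2$ to be homothetic, hence $\Omega_1$ and $\Omega_2$ homothetic in $\R^2$. The hard part is precisely here: the sets $\widehat\Omega_i$ are flat, so the standard rigidity statements, which assume nonempty interior in $\R^3$, do not apply verbatim, and the thickening used for the inequality is of no help since it destroys the homothety relation whenever the ratio $\mu\ne1$. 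One must therefore extend the rigidity to lower-dimensional convex bodies, either by a dedicated approximation scheme or by reworking the Caffarelli--Jerison--Lieb argument, which is built on the structure of the capacitary potential and the strict monotonicity of capacity along Minkowski segments, ingredients that remain available for the flat sets $\widehat\Omega_i$.
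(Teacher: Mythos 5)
Your proof of the inequality itself is sound, and it is one of the two routes the paper explicitly indicates: the identification of $\I_1(\Omega)$ with (a constant multiple of) the Newtonian energy of $\Omega\times\{0\}$ in $\R^3$, the thickening to $\Omega_i\times[-\eps,\eps]$, Borell's concavity of $\mathrm{Cap}_{\R^3}$ under Minkowski addition (exponent $1/(n-2)=1$ for $n=3$), and the passage to the limit by continuity of the capacity along decreasing sequences of compact sets all work as you describe. The paper simply cites \cite[Theorem 1.1(ii)]{NovRuf} for the inequality, remarking that it can also be obtained as the thin-cylinder limit of \cite{borell}, which is precisely your argument.

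The genuine gap is exactly where you flag it: the equality case. You correctly diagnose that the Caffarelli--Jerison--Lieb and Colesanti--Salani rigidity statements are formulated for convex bodies with nonempty interior in $\R^3$, that the flat sets $\widehat\Omega_i$ violate this, and that the $\eps$-thickening is useless here because it destroys the homothety relation; but you then leave ``extend the rigidity to lower-dimensional convex bodies'' as a task to be done, so the ``only if'' direction of the lemma remains unproved. The paper closes this gap by a specific device that you should be aware of: it introduces the harmonic extension $u$ of the planar Riesz potential $v$ to $\R^3\setminus(\Omega\times\{0\})$, which is (up to a constant factor) the Newtonian capacitary potential of the flat conductor, and invokes \cite[Theorem 1.1(i)]{NovRuf} to the effect that the superlevel sets $\{x\in\R^3 : u(x)>t\}$, $t\in(0,1)$, are \emph{regular convex bodies in $\R^3$}, i.e.\ non-degenerate even though the conductor itself is flat. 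The final part of the equality argument in \cite[Theorem 1, pp.~476--478]{ColSal} is carried out entirely on these level sets --- it uses only the level-set structure of the capacitary potentials, not non-degeneracy of the conductors themselves --- so it applies without modification and yields that $\Omega_1$ is homothetic to $\Omega_2$. If you want to complete your proof along your own lines, this is the missing ingredient: rather than extending the rigidity theorem to lower-dimensional sets, establish convexity and regularity of the superlevel sets of the potential of the flat conductor and run the known rigidity argument one level up.
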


\begin{proof}
	The inequality has been proved in \cite[Theorem 1.1 (ii)]{NovRuf}
	and can also be seen as the limit case of the corresponding
	inequality for Newtonian capacity of thin cylindrical domains in
	$\R^3$ \cite{borell}.
	
	To get the characterization of the equality case, we introduce
	$u : \R^3 \to \R$, which is the harmonic extension of $v$ in
	\eqref{eq:vriesz} to $\R^3 \backslash (\Omega \times \{0\})$.
	Notice that up to a constant factor the function $u$ is the
	capacitary potential of a convex set
	$\Omega \times \{0\} \subset \R^3$, and the level sets
	$\{x\in \mathbb R^3: u(x)>t\}$ are regular convex bodies in $\R^3$
	for all $t\in (0,1)$ (see \cite[Theorem 1.1 (i)]{NovRuf}). With this
	observation, the final part of the proof of \cite[Theorem 1,
	pp. 476-478]{ColSal} applies without modifications.
\end{proof}

The following is the cornerstone result for the proof of
Theorem \ref{t:A}.  Its proof, in a different setting, was developed
in \cite{bucfralam}.

\begin{lemma}\label{l:balls}
	Let $\Omega\in\mathcal A$ be a convex set. Then
	$\E_\lambda^Q(\Omega)\ge \E_\lambda^Q(\overline B_\Omega)$, where
	$\overline B_\Omega$ is a ball with the same perimeter as $\Omega$,
	with equality if and only if $\Omega$ is a closed ball.
\end{lemma}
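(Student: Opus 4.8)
The plan is to use the Minkowski-symmetrization trick: deform the convex set $\Omega$ towards a ball via Minkowski averages with a ball of the same perimeter, and show that along this path the energy $\E^Q_\lambda$ is non-increasing, with strict decrease unless $\Omega$ is already a ball. Concretely, for $t\in[0,1]$ set $\Omega_t := (1-t)\Omega + t\,(c+\overline B_{R_\Omega})$, where $R_\Omega$ is chosen so that $P(\overline B_{R_\Omega})=P(\Omega)$ and $c$ is irrelevant by translation invariance. Since $\Omega$ is convex and $\overline B_{R_\Omega}$ is convex, each $\Omega_t$ is convex, $\Omega_0=\Omega$ and $\Omega_1=\overline B_\Omega$. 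The two ingredients are: (a) the perimeter is \emph{additive} under Minkowski summation in the plane \cite{gardner}, so $\mathcal H^1(\partial\Omega_t)=(1-t)\,P(\Omega)+t\,P(\overline B_{R_\Omega})=P(\Omega)$ is constant along the path; and (b) the capacitary energy $\I_1$ is \emph{Minkowski-sublinear}, i.e.\ $C_1=\I_1^{-1}$ is Minkowski-concave, which is exactly Lemma \ref{lemPisa}.

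With these two facts the argument is short. Along the path the perimeter term is frozen, so
\begin{align}
\E^Q_\lambda(\Omega_t)=P(\Omega)+\lambda\,\I_1(\Omega_t)=P(\Omega)+\frac{\lambda}{C_1(\Omega_t)}.
\end{align}
By Lemma \ref{lemPisa}, $C_1(\Omega_t)\ge (1-t)C_1(\Omega)+t\,C_1(\overline B_{R_\Omega})$. Hence it suffices to show $C_1(\overline B_{R_\Omega})\ge C_1(\Omega)$, i.e.\ $\I_1(\overline B_\Omega)\le \I_1(\Omega)$ among convex sets of equal perimeter; this is precisely the maximizing property of balls for the Riesz capacity at fixed perimeter, which is the content of \cite[Corollary 3.2]{NovRuf} (equivalently, it follows by evaluating the same Minkowski path and passing to $t=1$, but to avoid circularity one invokes \cite{NovRuf} directly). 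Then $C_1(\Omega_t)\ge C_1(\Omega)$ for all $t$, so $\I_1(\Omega_t)\le\I_1(\Omega)$ and therefore $\E^Q_\lambda(\Omega_1)=\E^Q_\lambda(\overline B_\Omega)\le \E^Q_\lambda(\Omega)$.

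For the equality case, suppose $\E^Q_\lambda(\overline B_\Omega)=\E^Q_\lambda(\Omega)$. Then $\I_1(\Omega)=\I_1(\overline B_\Omega)$, i.e.\ $C_1(\Omega)=C_1(\overline B_{R_\Omega})$, and the concavity inequality from Lemma \ref{lemPisa} applied at, say, $t=\tfrac12$ forces the equality case there: $\Omega$ must be homothetic to $\overline B_{R_\Omega}$, hence a ball, which by $P(\Omega)=2\pi R_\Omega$ is exactly $\overline B_{R_\Omega}$ up to translation. The main subtlety to check carefully is twofold: first, that the sets $\Omega_t$ remain in the admissible class $\mathcal A$ (they are convex bodies with nonempty interior since $\Omega$ has positive area, so $\mathcal H^1(\partial\Omega_t)<\infty$ and $P(\Omega_t)=\mathcal H^1(\partial\Omega_t)$ by the Lipschitz-boundary remark); and second, that one genuinely has $P(\Omega)=\mathcal H^1(\partial\Omega)$ for the convex $\Omega$ under consideration so that the perimeter appearing in the energy and the Minkowski-additive perimeter coincide — again automatic for convex bodies. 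The expected main obstacle is not an obstacle so much as a bookkeeping point: ensuring that the input from \cite{NovRuf} (the fixed-perimeter capacity maximality of balls) is used, rather than re-derived, so the lemma is not proved circularly; everything else is a direct combination of Minkowski-linearity of perimeter and Lemma \ref{lemPisa}.
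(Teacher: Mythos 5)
Your two ingredients are the right ones (Minkowski linearity of the perimeter in the plane and the Brunn--Minkowski inequality of Lemma \ref{lemPisa}), and your conclusion is correct, but the route differs from the paper's in one essential respect. As you yourself notice, the path $\Omega_t=(1-t)\Omega+t\,\overline B_{R_\Omega}$ does no logical work for the inequality: at $t=1$ the concavity bound collapses to $C_1(\overline B_{R_\Omega})\ge C_1(\overline B_{R_\Omega})$ unless one already knows $C_1(\overline B_{R_\Omega})\ge C_1(\Omega)$, which \emph{is} the lemma. So your entire inequality rests on the citation of \cite[Corollary 3.2]{NovRuf}. The paper instead proves this self-containedly, and the idea that breaks the circularity is Hadwiger's theorem \cite[Theorem 3.3.2]{schneider}: one averages $\Omega$ with \emph{rotated copies of itself}, $\Omega_n=n^{-1}\sum_{k=1}^n T_k(\Omega)$, which converge in the Hausdorff metric to a ball $\overline B_\Omega$ of the same perimeter. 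Lemma \ref{lemPisa} then gives $\I_1(\Omega_n)\le\I_1(\Omega)$ with no a priori knowledge about balls, and passing to the limit yields both the inequality and, via the homothety rigidity in Lemma \ref{lemPisa} combined with $\Omega_n\to\overline B_\Omega$, the equality case. Since the paper presents Corollary \ref{t:B} as a generalization of \cite[Corollary 3.2]{NovRuf}, your citation is presumably accurate and your argument is not wrong; it just delegates the heart of the lemma to the reference rather than proving it, which is worth being aware of since the paper's later results (Corollary \ref{t:B} in particular) are built so as not to presuppose it.

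There is also a small gap in your equality case. If $C_1(\Omega)=C_1(\overline B_{R_\Omega})$, Lemma \ref{lemPisa} at $t=\tfrac12$ only gives the one-sided bound $C_1(\Omega_{1/2})\ge \tfrac12 C_1(\Omega)+\tfrac12 C_1(\overline B_{R_\Omega})$; this does not by itself ``force the equality case.'' To conclude equality in the Brunn--Minkowski inequality you must also bound $C_1(\Omega_{1/2})$ from above by $C_1(\overline B_{R_\Omega})$, which follows by applying the cited fixed-perimeter maximality of the ball once more to the intermediate convex body $\Omega_{1/2}$ (admissible, and of the same perimeter by Minkowski linearity). With that extra line the rigidity of Lemma \ref{lemPisa} gives that $\Omega$ is homothetic to a ball, hence equal to $\overline B_{R_\Omega}$ up to translation, and your argument closes.
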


\begin{proof}
  By a result of Hadwiger \cite[Theorem 3.3.2]{schneider}, we know
  that there exists a sequence of Minkowski rotation means of $\Omega$
  which converges to a ball in the Hausdorff metric.  Namely, there
  exists a sequence of rotations $\{T_n\}_{n\in\mathbb N}$ such that
  the sets
  \begin{align}
    \Omega_n:=\frac{\sum_{k=1}^n T_k(\Omega)}{n}    
  \end{align}
  converge in the Hausdorff metric to a ball $\overline B_\Omega$. We
  note that by Lemma \ref{lemPisa} we have
  \begin{equation}\label{eq:lem7}
    \I_1(\Omega_n)=\frac{1}{C_1(\Omega_n)}\le\frac{1}{C_1(\Omega)}=\I_1(\Omega).
  \end{equation}
  Suppose that the equality holds in \eqref{eq:lem7} for any
  $n\in\mathbb N$, so that $\Omega_n$ is homothetic to $\Omega$. Since
  $\Omega_n\to \overline B_\Omega$ in Hausdorff metric, we get that
  $\Omega=\overline B_\Omega$.
	
  Since the perimeter is linear with respect to the Minkowski sum of
  convex sets in $\R^2$, we have
  $\mathcal H^1(\partial \overline B_\Omega)=\mathcal
  H^1(\partial\Omega)$.  Gathering the information above, we get
  \begin{align}
    \mathcal H^1(\partial \overline B_\Omega)+\lambda\I_{1}(\overline
    B_\Omega)
    &
      =\lim_{n\to\infty} \left( \mathcal H^1(\partial
      \Omega_n)+\lambda \I_{1}(\Omega_n) \right) \le \mathcal H^1(\partial 
      \Omega)+\lambda \I_1(\Omega),
  \end{align}
  with equality if and only if $\Omega=\overline B_\Omega$. This
  concludes the proof.
\end{proof}

\subsection{Indecomposable
  components} \label{indecomposablecomponents}

Given a set $\Omega \in \mathcal A$, let $\overline \Omega^M$ be its
measure theoretic closure, namely:
\begin{align}
\label{eq:OmMbar}
\overline \Omega^M := \left\{ x \in \R^2 \ : \ \liminf_{r \to 0}
{|\Omega \cap B_r(x)| \over \pi r^2} > 0 \right\}.
\end{align}
Since
$P(\overline{\Omega}^M)= P(\Omega) = \mathcal H^1(\partial^M \Omega)
\leq \mathcal H^1(\partial \Omega) < +\infty$,
the set $\overline \Omega^M$ is a set of finite perimeter \cite{AFP}.
Therefore, following \cite{ACMM}, for a set $\Omega \in \mathcal A$ we
can write
$\overline \Omega^M = \left( \bigcup_{{i}}\Omega_i\right) \cup
\Sigma$,
with $\mathcal H^1(\Sigma) = 0$, where the sets
$\Omega_i\in \mathcal A$ are the so-called {\it indecomposable
	components} of $\overline\Omega^M$ (finitely many or a countable
family). In particular, the sets $\Omega_i$ admit unique
representatives that are connected and satisfy the following
properties:
\begin{itemize}
	\item $\mathcal H^1(\Omega_i\cap \Omega_j)=0$ for $i\ne j$, 
	\item $|\overline\Omega^M|=\sum_{{i}} |\Omega_i|$,
	\item
	$P(\Omega)=P(\overline\Omega^M)
	=\sum_{{i}}P(\Omega_i)$,
	\item $\Omega_i=\overline{\text{int}(\Omega_i)}$.
\end{itemize} 
Moreover, each set $\Omega_i$ is indecomposable in the sense that it
cannot be further decomposed as above. We refer to these
representatives of $\Omega_i$ as the {\em connected components} of
$\Omega \in \mathcal A$. Note that such a definition is necessary,
since we are dealing with equivalence classes of subsets of $\R^2$ for
which the usual notion of connectedness is not well defined (consider,
for instance, the union of two touching closed balls, which is
equivalent to the same set minus the contact point). We point out,
however, that this notion coincides with the standard notion of
connected components in the following sense: if $\Omega$ has a regular
boundary, for instance, Lipschitz continuous, then the components
$\Omega_i$ are (the closure of) the usual connected components of the
interior of $\Omega$.

Observe that
$\mathcal H^1(\partial \, co(\Omega_i))\le \mathcal H^1(\partial
\Omega_i)$
for every $i$, where $co(\Omega_i)$ denotes the convex envelope of
$\Omega_i$. This follows from the fact that the outer boundary of a
connected component can be parametrized by a Jordan curve of finite
length (see \cite[Section 8]{ACMM}). In addition, since
$\partial \Omega$ is negligible with respect to the equilibrium
measure for $\I_1(\Omega)$ by Lemma \ref{l:optm}, we have
$\I_1(\Omega) = \I_1(\overline \Omega^M)$.  Lastly, we have
$|\Omega| = |\overline \Omega^M|$ and
$\mathrm{int}(\Omega) = \mathrm{int} (\overline \Omega^M)$.

We conclude this section by showing that any admissible set may be
replaced by an admissible set with lower energy that consists of only
a finite number of connected components. In particular, this result
shows that minimizers of $\E^Q_\lambda$ over $\mathcal A$ are
connected whenever they exist. We note that replacing an admissible
set by a set with finitely many connected components as in the lemma
below imparts a certain degree of regularity. Namely, our procedure
removes the parts of the topological boundary of an admissible set
that have postive $\mathcal H^1$ measure, but do not belong to the
measure theoretic closure (e.g., a line segment attached to a ball).
\begin{lemma}\label{l:components}
  Let $\Omega\in\mathcal A$. Then there exists a set
  $\Omega'\in\mathcal A$ such that $\Omega'$ has a finite number of
  {connected} components, and
  \begin{equation}
    \label{energy}
    \E_\lambda^Q(\Omega')\le\E_\lambda^Q(\Omega).
  \end{equation} 
  Moreover, the inequality in \eqref{energy} is strict whenever
  $\Omega$ has more than one connected component.
\end{lemma}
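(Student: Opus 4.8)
The plan is to prove the slightly stronger statement that for \emph{every} $\Omega\in\mathcal A$ the closed ball $\overline B_\rho$ with $2\pi\rho=\mathcal H^1(\partial\Omega)$ satisfies $\E_\lambda^Q(\overline B_\rho)\le\E_\lambda^Q(\Omega)$, with strict inequality whenever $\Omega$ has more than one connected component. Since $\mathcal H^1(\partial\overline B_\rho)=\mathcal H^1(\partial\Omega)$ by construction and $\I_1(\overline B_\rho)=\pi/(2\rho)=\pi^2/\mathcal H^1(\partial\Omega)$ by Lemma \ref{l:ell}, this reduces to the capacitary bound
\begin{align}
\label{eq:keycomp}
\I_1(\Omega)\ \ge\ \frac{\pi^2}{\mathcal H^1(\partial\Omega)}
\end{align}
together with the determination of when it is strict; the ball $\overline B_\rho$ has a single component, so it is an admissible choice of $\Omega'$. (If one prefers an $\Omega'$ that is left unchanged when $\Omega$ is already a ball, take $\Omega'=\overline B_\rho$ when $\Omega$ has at least two components and $\Omega'=\Omega_1$, the only component, otherwise; in the latter case \eqref{eq:keycomp} still gives $\E^Q_\lambda(\Omega_1)\le\E^Q_\lambda(\Omega)$.)

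I would prove \eqref{eq:keycomp} componentwise, using the structure recalled at the start of Section \ref{indecomposablecomponents}. Write $\overline\Omega^M=\big(\bigcup_i\Omega_i\big)\cup\Sigma$ with $\mathcal H^1(\Sigma)=0$, so $\I_1(\Omega)=\I_1(\overline\Omega^M)$ and $\sum_iP(\Omega_i)=P(\Omega)\le\mathcal H^1(\partial\Omega)$. Set $C_1:=\I_1^{-1}$ as in Lemma \ref{lemPisa}. Countable subadditivity of the $1$-Riesz capacity (standard potential theory, \cite{landkof}) together with $C_1(\Sigma)=0$ gives $C_1(\Omega)\le\sum_iC_1(\Omega_i)$. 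For each component, monotonicity of capacity under inclusion gives $C_1(\Omega_i)\le C_1(co(\Omega_i))$; since $co(\Omega_i)$ is convex with the same perimeter as the ball $\overline B_{co(\Omega_i)}$, Lemma \ref{l:balls} yields $C_1(co(\Omega_i))\le C_1(\overline B_{co(\Omega_i)})=\mathcal H^1(\partial co(\Omega_i))/\pi^2$, and $\mathcal H^1(\partial co(\Omega_i))\le P(\Omega_i)$ because $co(\Omega_i)$ is the convex hull of the outer Jordan curve of $\Omega_i$, whose length is at most $P(\Omega_i)$. Summing over $i$,
\begin{align}
\label{eq:chaincomp}
C_1(\Omega)\ \le\ \sum_iC_1(\Omega_i)\ \le\ \frac1{\pi^2}\sum_iP(\Omega_i)\ =\ \frac{P(\Omega)}{\pi^2}\ \le\ \frac{\mathcal H^1(\partial\Omega)}{\pi^2},
\end{align}
which is \eqref{eq:keycomp}.

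For strictness, suppose $\Omega$ has two components $\Omega_1,\Omega_2$. Inequality \eqref{stdue} of Lemma \ref{l:partit}, after optimizing the charge split, is exactly the strict subadditivity $C_1(\Omega_1\cup\Omega_2)<C_1(\Omega_1)+C_1(\Omega_2)$ (its hypotheses hold since both $\Omega_i$ have non-empty interior, hence positive capacity). Feeding this into the first step of \eqref{eq:chaincomp}, now applied to the collection $\{\Omega_1\cup\Omega_2,\Omega_3,\Omega_4,\dots\}$, makes \eqref{eq:keycomp} strict, so $\E_\lambda^Q(\overline B_\rho)<\E_\lambda^Q(\Omega)$. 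The main obstacle is not this short chain of inequalities but the measure-theoretic bookkeeping underneath it: one must legitimately invoke the ACMM decomposition into indecomposable components and the identities $\I_1(\Omega)=\I_1(\overline\Omega^M)$ and $\sum_iP(\Omega_i)=P(\Omega)\le\mathcal H^1(\partial\Omega)$, establish the bound $\mathcal H^1(\partial co(\Omega_i))\le P(\Omega_i)$, and check that Lemma \ref{l:partit} applies to two components that may meet on an $\mathcal H^1$-negligible set (for which it suffices that the equilibrium measure of the union charges both pieces positively) — all of which rely on the boundary structure, rectifiable Jordan curves, recalled before the statement.
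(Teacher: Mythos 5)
Your proposal is correct, but it takes a genuinely different route from the paper. The paper's proof never compares $\Omega$ to a ball at this stage: it keeps finitely many indecomposable components carrying all but an $\eps$-fraction of the equilibrium charge, translates them far apart along a line, discards the rest, and shows via the interaction estimates of Lemma \ref{l:partit} that $\I_1$ drops by a definite amount $M(\Omega)/(4R)>0$ while the perimeter does not increase; the reduction to a single ball is deferred to the proof of Theorem \ref{t:A}. You instead prove the stronger inequality $\I_1(\Omega)\ge \pi^2/\mathcal H^1(\partial\Omega)$ and take $\Omega'$ to be the ball of equal perimeter, which front-loads essentially all of Corollary \ref{t:B}: your chain (subadditivity of $C_1$ over the ACMM components, monotonicity under convexification, Lemma \ref{l:balls} applied to each convex hull, and the perimeter bookkeeping $\sum_i\mathcal H^1(\partial\,co(\Omega_i))\le\sum_iP(\Omega_i)=P(\Omega)\le\mathcal H^1(\partial\Omega)$) is sound, there is no circularity since Lemmas \ref{l:ell}, \ref{l:partit}, \ref{lemPisa} and \ref{l:balls} precede this lemma and do not use it, and your strictness argument via the optimized form of \eqref{stdue} (whose proof indeed only needs $\mu(\Omega_1\cap\Omega_2)=0$ and $\mu(\Omega_i)>0$, both of which hold for touching components) is correct. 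What the paper's route buys is that it stays entirely within the two-set estimates of Lemma \ref{l:partit}; what yours buys is brevity and the fact that Theorem \ref{t:A} then follows almost immediately, modulo the equality analysis.

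The one ingredient you use that the paper never invokes is the \emph{countable} subadditivity of the $1$-Riesz capacity, needed both in the first step of your chain (there may be countably many components) and to discard $\Sigma$. This is true for Riesz kernels satisfying the maximum principle, hence for $|x-y|^{-1}$ in $\R^2$, but it deserves a precise citation (the relevant statements in \cite{landkof} are the countable subadditivity of outer capacity together with capacitability of compact sets and continuity along increasing sequences, since $\bigcup_i\Omega_i$ need not be compact); for finitely many components one can instead iterate the strict two-set inequality derived from \eqref{stdue}. With that reference supplied, your argument is complete.
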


\begin{proof}
  Without loss of generality, we may assume that $\overline \Omega^M$
  has at least two indecomposable components.  Let
  $\overline{\Omega}^M=(\bigcup_{i\in I}\Omega_i)\cup\Sigma$ be the
  decomposition of $\overline\Omega^M$ into indecomposable components.
  Let $0<\eps<{1/2}$ to be fixed later, and let $N \geq 2$, depending
  on $\eps$, be such that
  $1-\delta:=\sum_{i=1}^{N}\mu(\Omega_i)\ge1-\eps$, where $\mu$ is the
  {equilibrium} measure of $\Omega$.  If $I$ is finite, we let $N$ be
  the number of indecomposable components of $\Omega$.
	
  For $i\in I$, we let $\xi_i$ be the barycenter of $\Omega_i$ and set
  \begin{align}
    \Omega^N(t):=\bigcup_{i=1}^N \left(\Omega_i-\xi_i+{i t e_1}
    \right), 
  \end{align}
  where $e_1$ denotes the unit vector in the first coordinate
  direction.  Let also $f_i^{t}({x}):={x} -\xi_i+{i t e_1}$ and let
  $\mu_i^t:=(f_i^{t})_\sharp \mu|_{\Omega_i}$ be the pushforward of
  $\mu_i:=\mu|_{\Omega_i}$ through the map $f_i^{t}$.  By a
  straightforward computation, we get
  \begin{align}
    \I_1(\Omega)-\I_1(\Omega^N(t))
    &\ge\left(1-\frac{1}{(1-\eps)^2}
      \right)\sum_{i=1}^N\iint \frac{d\mu_i(x)\,d\mu_i(y)}{|x-y|}
    \\
    &+{2 \sum_{i=1}^{N-1} \sum_{j=i+1}^N}
      \iint\frac{d\mu_i(x)\,d\mu_j(y)}{|x-y|} \notag \\
    &-\frac{{2}}{(1-\eps)^2}{\sum_{i=1}^{N-1}
      \sum_{j=i+1}^N}
      \iint\frac{d\mu_i^t(x)\,d\mu_j^t(y)}{|x-y|}\,. \notag 
  \end{align}
  Since $\Omega\in\mathcal A$, there exists $R>0$ such that
  $\diam( \Omega)<R$. Moreover if $t>2R$ we have that whenever
  $i\ne j$, for any
  $(x,y)\in\left(-\xi_i+\Omega_i+{i t e_1} \right)\times\left(-\xi_j
    +\Omega_j+{j t e_1}\right)$ it holds $|x-y|\ge t-2R$.  Let also
  \begin{align}
    M(\Omega):={2 \sum_{i=1}^{N-1} \sum_{j=i+1}^N} \iint
    d\mu_i(x)\,d\mu_j(y)>0.
  \end{align}
  The last inequality follows from Lemma \ref{l:optm}.  With these
  choices, we get
  \begin{align}
    \I_1(\Omega)-\I_1(\Omega^N(t))
    &\ge -{8} \eps
      \sum_{i=1}^N\iint
      \frac{d\mu_i(x)\,d\mu_i(y)}{|x-y|}+\left(\frac{1}{R}
      -{\frac{4}{t-2R}}\right)M(\Omega) 
      \notag 
    \\
    &\ge -{8} \eps\I_1(\Omega) +
      \left(\frac{1}{R}-{\frac{4}{t-2R}}\right)M(\Omega)
      \,\ge\, \frac{M(\Omega)}{{4}R}\,,
  \end{align}
  where the last inequality follows if we choose
  $t \geq 10 R$ and $\eps$ small enough depending on $\Omega$.
  Since
  $\mathcal H^1(\partial\Omega)\ge \mathcal H^1(\partial\Omega^N(t))$,
  we get that $\Omega':=\Omega^N(t)$ is the desired set.
\end{proof}

\section{Proofs of the main results}\label{sec:proofs}

We turn now to the proofs of Theorems \ref{t:A}$-$\ref{t:EUAm}.

\begin{proof}[Proof of Theorem \ref{t:A}]
  Let $\Omega\in\mathcal A$, hence $\E_\lambda^Q(\Omega)<+\infty$.  We
  first show that there exists a closed ball $\overline B$ such that
  \begin{equation}\label{eqqom}
    \E_\lambda^Q(\Omega)\ge \E_\lambda^Q(\overline B),
  \end{equation}
  and equality holds if and only if $\Omega$ itself is a ball.
	
  Thanks to Lemma \ref{l:components} we can suppose that the number of
  the {connected} components $\Omega_i$ {of $\Omega$} is finite. If
  $\Omega$ has only one connected component, then we can convexify it
  and use Lemma \ref{l:balls}. So we suppose that $\Omega$ has at
  least two connected components.
	
  Let $co(\Omega_{i})$ be the convex envelope of $\Omega_{i}$, and
  denote by $\mu$ the equilibrium measure of $\Omega$ and $\mu^i$ its
  restriction to $\Omega_i$. Note that $\mu^i(\Omega_i) > 0$ for all
  $i$ by Lemma \ref{l:optm}. Then by Lemma \ref{l:balls} we get
  \begin{equation}\label{2-0}
    \begin{aligned} 
      \E_\lambda^Q(\Omega)&=\sum_i \mathcal H^1(\partial
      \Omega_{i})+\lambda\sum_i \iint
      \frac{d\mu^i{(x)}\,d\mu^i{(y)}}{|x-y|}+\lambda\sum_{i\ne
        j}\iint \frac{d\mu^i{(x)}\,d\mu^j{(y)}}{|x-y|}\\
      &\ge \sum_i\mathcal H^1(\partial \,
      co(\Omega_{i}))+\lambda\sum_i
      {(\mu^i(\Omega_{i}))^2}\I_1(\Omega_{i})+\frac{\lambda
        M(\Omega)}{{\rm diam}(\Omega)}\\
      &\ge \sum_i\mathcal H^1(\partial \,
      co(\Omega_{i}))+\lambda\sum_i
      {(\mu^i(\Omega_{i}))^2}\I_1(co(\Omega_{i}))+\frac{\lambda
        M(\Omega)}{{\rm diam}(\Omega)}\\
      &\ge \sum_i\mathcal H^1(\partial B^i))+\lambda\sum_i
      (\mu^i(\Omega_{i}))^2\I_1(\overline{B^i})+\frac{\lambda
        M(\Omega)}{{\rm diam}(\Omega)}.
    \end{aligned}
  \end{equation}
  where $M(\Omega) := \sum_{i\ne j}\mu(\Omega_i)\mu(\Omega_j)>0$ and
  $B^i$ are balls with mutual distance greater than a number $n$ to be
  fixed later, and such that
  $\mathcal H^1(\partial B^i)=\mathcal H^1(\partial \Omega_{i})$.  In
  the first inequality we used the fact that
  $(\mu^i(\Omega_i))^{-1} \mu^i$ is admissible in the definition of
  $\I_1(\Omega_i)$, while for the second one we exploited the
  inequality $\I_1(\Omega_i)\ge\I_1(co(\Omega_i))$.  Let now
  $\widetilde{\mu}^i$ be the optimal measure for $B^i$. Then the
  measure $\sum_i \mu(\Omega_i)\widetilde{\mu}^i$ is a probability
  measure on $\cup_i B^i$. Thus
  \begin{equation}\label{1-0}
    \begin{aligned}
      \I_1(\cup_i B^i)&\le\sum_i\mu^2 (\Omega_i)\iint
      \frac{d\widetilde{\mu}^i(x)\,d\widetilde{\mu}^i(y)}{|x-y|}+\sum_{i\ne
        j}\mu(\Omega_i)\mu(\Omega_j)\iint
      \frac{d\widetilde{\mu}^i(x)\,d\widetilde{\mu}^j(y)}{|x-y|}\\
      &\le\sum_i \mu^2 (\Omega_i) \, \I_1(B^i)+\frac{\sum_{i\ne
          j}\mu(\Omega_i)\mu(\Omega_j)}{n}.
    \end{aligned}
  \end{equation}
  By \eqref{2-0} and \eqref{1-0} we get
  \begin{align}
    \E_\lambda^Q(\Omega)\ge \E_\lambda^Q(\cup_i B^i)+\frac{\lambda
    M(\Omega)}{{\rm
    diam(\Omega)}}-\frac{\lambda
    M(\Omega)}{n}\ge\E_\lambda^Q(\cup_i B^i),   
  \end{align}
  where the last inequality holds if we choose $n$ big enough,
  depending on $\Omega$.
	
  To conclude, we only have to show that
  \begin{equation}\label{ballminimal}
    \E_\lambda^Q(\cup_i \overline{B^i})\ge \E_\lambda^Q(\overline B)
  \end{equation}
  for a ball $B$ such that
  $\mathcal H^1(\partial B) = \sum_i \mathcal H^1(\partial B^i)$,
  where equality holds only if the union of balls in the left-hand
  side of \eqref{ballminimal} is indeed a unique ball. But this is
  exactly the statement of Lemma \ref{lemball}.  Once we know that the
  minimizer of $\E_\lambda^Q$ in $\mathcal A$ is a ball, we conclude
  by optimizing on the radius of all possible balls.
\end{proof}

\begin{proof}[Proof of Corollary \ref{t:B}]
  The proof is contained in that of Theorem \ref{t:A}, also noting
  that at every step of the proof the perimeters of the considered
  sets cannot increase. Therefore, if the perimeter of the final ball
  $B$ is strictly less than $\mathcal H^1(\partial \Omega)$, we can
  further decrease $\I_1$ by dilating that ball to ensure that its
  perimeter equals $\mathcal H^1(\partial \Omega)$.
\end{proof}

\begin{proof}[Proof of Theorem \ref{t:Am}]
  We first prove $(i)$.  Notice that if $\lambda=\lambda_0^Q$, by
  Theorem \ref{t:A} we know that the ball $\overline B_R$ is a global
  minimizer of $\mathcal E_{\lambda}^Q$ in $\mathcal A$.  In
  particular
  $\E^Q_{\lambda_0^Q} (\Omega)\ge \E^Q_{\lambda_0^Q}(\overline B_R)$,
  with equality if and only if $\Omega$ is a closed ball.
	
  Now, since $\mathcal \I_1$ is maximized by balls under volume
  constraint, see \cite[p. 14]{polya1} and \cite[Sec. VII.7.3,
  p. 157]{polya2}, and since $\lambda_0^Q - \lambda>0$, we get
  \begin{align}
    \E^Q_\lambda (\Omega) = \E^Q_{\lambda_0^Q} (\Omega) - (\lambda_0^Q -
    \lambda) \I_1(\Omega) \geq \E^Q_{\lambda_0^Q}(\overline B_R) -
    (\lambda_0^Q - \lambda) \I_1(\Omega) \\
    \geq \E^Q_{\lambda_0^Q}(\overline B_R) -
    (\lambda_0^Q - 
    \lambda) \I_1(\overline B_R) = \E^Q_\lambda(\overline B_R), \notag
  \end{align}
  with equality if and only if $\Omega$ is a ball; thus
  $\overline B_R$ is a minimizer. Moreover all other minimizers are
  translates of $\overline B_R$. Indeed every minimizer has $B_R$ as
  its interior (up to a translation), and the boundary of the
  minimizer differs from $\partial B_R$ by a set of zero
  $\mathcal H^1$ measure.
	
  \medskip
	
  We now prove $(ii)$.  We have to show that there is no minimizer of
  $\E^Q_\lambda$ over $\mathcal A_m$ for $\lambda > \lambda_0^Q$.  We
  argue by contradiction and assume that there is a minimizer
  $\Omega \in \mathcal A_m$ of $\E^Q_\lambda$. In view of the
  statement of Theorem \ref{t:A}, we have
  $\E^Q_\lambda(\Omega) > 2 \pi \sqrt{\lambda}$. We now consider a
  competitor set
  $\widetilde \Omega = \overline B_{\widetilde R}(0)
  \cup\left(\bigcup_{i=1}^N \overline B_{r/N} (x_i)\right)$,
  where $N \in \mathbb N$, $r > 0$ is to be determined,
  ${\widetilde R} = \sqrt{ {m \over \pi} - {r^2 \over N}}$, and
  $x_i \in \R^2$ with $|x_i - x_j| \gg {\widetilde R}$ for any
  $i \not= j$. As in Lemma \ref{l:partit}, the energy of
  $\widetilde \Omega$ may be estimated from above by partitioning the
  total charge between $\overline B_{\widetilde R}(0)$ and
  $\overline B_{r/N} (x_i)$.  Suppose that the {fraction of the total}
  charge on each {ball} $\overline B_{r/N} (x_i)$ is equal to
  {$\theta / N$ for some} $\theta \in (0,1)$. We chose
  $r = {\frac12 \theta \sqrt{\lambda}}$ and $\theta$ in such a way
  that the charge {fraction $1 - \theta$} on
  $\overline B_{\widetilde R} (0)$ is optimal for a ball of radius
  ${\widetilde R}$ according to Theorem \ref{t:A}: Then we get
  \begin{align}
    \lambda ( 1 - \theta)^2 = {4 \widetilde R^2 = } \lambda_0^Q -
    {\theta^2 \lambda \over N},  
  \end{align}
  which for $\lambda > \lambda_0^Q$ has a unique solution for $\theta$
  whenever $N$ is sufficiently large. On the other hand, by Lemma
  \ref{l:partit} with our choices of the {charge partitioning} we
  have, for any $\eps > 0$ and $x_i$ sufficiently far apart:
  \begin{align}
    \E^Q_\lambda(\widetilde \Omega) 
    & \leq \E^Q_{\lambda(1 - \theta)^2} (\overline
      B_{R_0}(0)) + \sum_{i=1}^N \E^Q_{\lambda \theta^2 / N^2}
      (\overline B_{r/N} (x_i)) + \eps \\
    & = 2 \pi (1 - \theta) \sqrt{\lambda} + \sum_{i=1}^N {2 \pi \theta
      \sqrt{\lambda} \over N} + \eps =  2 \pi 
      \sqrt{\lambda} + \eps. \notag 
  \end{align}
  In view of arbitrariness of $\eps$, this contradicts the minimality
  of $\E^Q_\lambda(\Omega) > 2 \pi \sqrt{\lambda}$.
\end{proof}

\begin{proof}[Proof of Theorem \ref{t:scale}]
  Just use as a competitor the set $\widetilde{\Omega}$ constructed in
  the proof of Theorem \ref{t:Am}, point $(ii)$, to show that for any
  $\varepsilon>0$ there exists a set whose energy is lower than
  $2\pi\sqrt{\lambda}+\varepsilon$. By Theorem \ref{t:A} its energy is
  greater than $2\pi\sqrt{\lambda}$, thus we conclude.
\end{proof}

\begin{proof}[Proof of Theorem \ref{t:EUA}]
  By Corollary \ref{t:B} we know that for all $\Omega\in\mathcal A$ we
  have $\E^U_\lambda(\Omega)\ge \E^U_\lambda(\overline B_\Omega)$,
  where $\overline B_\Omega$ is a ball with the same perimeter as
  $\Omega$, with equality if and only if $\Omega$ is a ball.  As the
  energy of a ball of radius $R$ is given in \eqref{eq:EUBR}, this
  immediately implies that if $\lambda=\lambda_0^U$, then the energy
  of any ball is zero. In particular, statement $(ii)$ follows.
	
  If $\lambda>\lambda_0^U$, by \eqref{eq:EUBR} the energy of a ball of
  radius $R$ diverges to $-\infty$ as $R\to+\infty$, proving statement
  $(iii)$.  To prove $(i)$, let $\lambda<\lambda_0^U$ and recall that
  $\I_1^{-1}$ is minimized by balls under volume constraint. Then we
  have that, for any $\Omega\in\mathcal A$, it holds
  \begin{align}
    \E_\lambda^U(\Omega)
    &= \E_{\lambda_0^U}^U(\Omega)+(\lambda_0^U-\lambda)
      \I_1^{-1}(\Omega) 
      \ge \E_{\lambda_0^U}^U(\overline
      B_R)+(\lambda_0^U-\lambda) \I_1^{-1}(\overline
      B_R) 
    \\ \label{turchia-croazia}
    &=\E_\lambda^U(\overline B_R)=
      (\lambda_0^U-\lambda) \frac{2R}{\pi}>0, \notag 
  \end{align}
  where $\overline B_R$ is a ball with the same volume as $\Omega$.
  Therefore, the infimum of $\E_\lambda^U$ is zero, and it is not
  attained.
\end{proof}

\begin{proof}[Proof of Theorem \ref{t:EUAm}]
  By \eqref{turchia-croazia} and Theorem \ref{t:EUA} $(ii)$ we
  immediately obtain $(i)$.  Let us prove $(ii)$. For $n\in\mathbb N$
  and $d>0$ {sufficiently large}, we let $r_n:=\sqrt{m/(\pi n)}$ and
  \begin{align}
    \Omega_{n,d}:=\bigcup_{i=1}^n B_{r_n}\left(x_i\right),    
  \end{align}
  where the centers $x_i\in \R^2$ are chosen in such a way that
  $|x_i-x_j|\ge d$ for $i\ne j$.  Thus $|\Omega_{n,d}|=m$, and {by
    Lemma \ref{l:partit}} its asymptotic energy as $d\to \infty$ {can
    be estimated as}
  \begin{align}
    {\limsup}_{d\to \infty}\E_\lambda^U(\Omega_{{n,d}}) {\leq}
    n\,\E_\lambda^U(\overline B_{r_n})
    =2\sqrt{\pi m}\left(1-\frac{\lambda}{\pi^2}
    \right)\sqrt{n}.      
  \end{align}
  Since $\lambda>\pi^2$, the latter diverges to $-\infty$ as
  $n\to \infty$.
\end{proof}

        





 \ack 

 The authors wish to thank M. Goldman, J. Lamboley and V. Moroz for
 helpful discussions on the subject. The work of CBM was supported, in
 part, by NSF via Grants No. DMS-1313687 and DMS-1614948. MN and BR
 were partially supported by the University of Pisa via grant
 PRA-2015-0017 and by GNAMPA of INdAM.

\frenchspacing
\bibliographystyle{cpam}
\bibliography{conductdrop2_cpam}

\end{document}